\newtheorem{theorem}{\bf Theorem}[section]
\newtheorem{lemma}[theorem]{\bf Lemma}
\newtheorem{cor}[theorem]{\bf Corollary}
\newtheorem{proposition}[theorem]{\bf Proposition}
\newtheorem{problem}[theorem]{\bf Problem}
\newtheorem{prop}[theorem]{\bf Proposition}
\theoremstyle{definition}
\newtheorem{remark}[theorem]{\bf Remark}
\newcommand{\LC}{\mathrm{LC}}
\newcommand{\AG}{\mathrm{AG}}
\newcommand{\Z}{\mathbb{Z}}
\newcommand{\N}{\mathbb{N}}
\newcommand{\dd}{\frac{p-1}{2}}
\newcommand{\legendre}[2]{\ensuremath{\left( \frac{#1}{#2} \right) }}
\newcommand{\rr}{r_\gamma}
\newcommand{\su}{\sum_{l=1}^{\dd}}
\newcommand{\sua}{\sum_{l=1}^{\dd} \legendre{a_l-\gamma}{p}}
\newcommand{\sub}{\sum_{l=1}^{\dd} \legendre{b_l-\gamma}{p}}
\newcommand{\F}{\mathbb{F}}
\newcommand{\ff}{\mathbb{F}}
\title{On polynomials of small range sum}
\author{Gergely Kiss\thanks{Alfréd Rényi Institute of Mathematics and Corvinus University of Budapest, The author is supported by the J\'anos Bolyai Research Fellowship, ÚNKP-23-5 New National Excellence Program of the Ministry of Culture and
Innovation from the source of the National Research, Development and Innovation Fund, OTKA grants FK 142993, STARTING 150576.
E-mail: {\tt kigergo57@gmail.com}} %\todo{affiliáció/mail/támogat}}
\and Ádám Markó \thanks{E\"otv\"os Lor\'and University, Budapest, Hungary, supported by OTKA grant FK 142993.
E-mail: {\tt marqadam@gmail.com}} %\todo{affiliáció/mail/támogató}
\and Zolt\'an L\'or\'ant Nagy\thanks{ELTE Linear Hypergraphs  Research Group,
 E\"otv\"os Lor\'and University, Budapest, Hungary. The author is supported by the Hungarian Research Grant (NKFIH) no. PD  134953 and no. 124950.  	E-mail: {\tt nagyzoli@cs.elte.hu}}
 \and Gábor Somlai \thanks{The University of Melbourne, Australia and E\"otv\"os Lor\'and University, Budapest, Hungary, supported by  OTKA Grants K138596, STARTING 150576, SNN	152582, and by ARC Discovery Project DP250104965.
E-mail: {gabor.somlai@unimelb.edu.au and gabor.somlai@ttk.elte.hu }}}
\date{}
\begin{document}

\maketitle

\begin{abstract}
In order to reprove an old result of Rédei on the number of directions determined by a set of cardinality $p$ in $\mathbb{F}_p^2$, Somlai proved that non-constant polynomials over the field $\mathbb{F}_p$ whose range sum is equal to $p$ are of degree at least $\frac{p-1}{2}$. Here the values are represented by integers in
 $[0,p-1]$.
In this paper, we characterise all of these polynomials having degree exactly $\frac{p-1}{2}$, if $p$ is large enough.
As a consequence, for the same set of primes, we re-establish the characterisation of sets with few determined directions due to Lovász and Schrijver using discrete Fourier analysis. 
\end{abstract}
\section{Introduction}
Given a set $S$ of $q$ points in the affine plane $\AG(2,q)$, where $q$ is a prime power, the classical direction problem,  first investigated by Rédei \cite{redei}, aims to determine the possible size of the direction set  of $S$, defined as $D=\{(a_i-a_j)/(b_i-b_j): 
 (a_i, b_i), (a_j, b_j)\in S, ~ i \ne j\}\subseteq \F_q \cup \{\infty\}$. 
Rédei \cite{redei} proved that if $S$ is a set of size $p$ in the affine plane $\AG(2,p)$, where $p$ is a prime, then $S$ is either a line or determines at least $\frac{p+1}{2}$ directions. This result was extended by Megyesi to exclude 
$\frac{p+1}{2}$, namely 
a set of size $p$ is either a line or determines at least 
$\frac{p+3}{2}$ directions.
The proofs rely on the usage of Rédei's polynomials, which have become a useful tool in finite geometry.
Later, the exact same result was obtained by Dress, Klin and Muzychuk \cite{DKM}, who exploited this fact to prove an old theorem of Burnside describing transitive permutation groups of degree $p$. 

Sets determining exactly $\frac{p+3}{2}$ directions exist and they were explicitly described by Lovász and Schrijver \cite{LS}. They proved that up to an affine transformation there is a unique set of this sort. 
Gács \cite{gacs} proved that there is another gap in the number of directions so a set of size $p$ in $\AG(2,p)$ is either a  line or the set determined by Lovász and Schrijver or determines at least $\lfloor 2\frac{p-1}{3} \rfloor+1$ directions. 

A new method to handle this set of problems arose in \cite{somlai}, where projection polynomials (defined in the next section) are used. 
The concept of projection polynomials appeared in a paper of Kiss and Somlai \cite{KS}.  
The idea of identifying the elements of $\Z_p$ with integers is also used in a similar context in a paper of De Beule, Demeyer, Mattheus, and Sziklai \cite{BDMS}, where 
lifting functions were introduced to treat polynomials over $\F_p$ as functions from $\F_p$ to $\{0,1, \ldots, p-1 \}\subset \N$.
This approach connects the direction problem to a natural general problem.
\begin{problem}
    Given a multiset $M$ of size $p$ over $\F_p$, determine the minimum degree of a polynomial over $\ff_p$  with range $M$.
\end{problem}

Recall that over any finite field $\ff_q$, any
function can be uniquely represented by a polynomial of degree at most $q-1$, as $x^q = x$ for all $x\in \F_q$. The degree of such a polynomial is called the \rm{reduced degree} of the polynomial (function). Thus for $q=p$ prime, the minimum (reduced) degree of a polynomial over $\ff_p$  with range $M$ is at most $p-1$.
It is easy to observe, on the other hand (see also Lemma \ref{lem:sumofvalues p-1coeff} in Section 2) that the reduced degree of a polynomial over $\ff_p$ must be $p-1$ unless the sum of the elements in the range is divisible by $p$. 
Thus, in terms of the sum of the elements, the first interesting question to study is when the sum of the elements, considered as elements from $[0,p-1]$, is equal to $p$. Throughout the paper, we use the term {\it range sum} for the sum of the admitted values of a polynomial with multiplicity, evaluated over all elements of a given prime field $\F_p$, where the admitted values are considered as elements from  $[0,p-1]\subset \mathbb{Z}$, i.e., $P ~\colon ~\mathbb{F}_p \to \{0,1, \ldots, p-1\}$. In order to emphasize this in our notation, we write $[P(x)]$ if we consider $P$ as an integer valued function. 
  It was proved in \cite{somlai} that if the range  sum  of a polynomial  over $\F_p$ is $p$, then either the polynomial is the constant $1$ or its degree is at least $\frac{p-1}{2}$.

Notice  that there are polynomials of degree $\frac{p-1}{2}$ whose range sum is $p$. 
The uniqueness of these polynomials is also asked in \cite{somlai}. We answer this question by proving some sort of uniqueness.
\begin{theorem}\label{thm:main}
Assume $p\ge520219910$ is a prime.
Up to affine transformations ($x \to ax+b$, where $a, b\in \F_p$ and $a\ne 0$), there are two polynomials $P$ of degree $\frac{p-1}{2}$ over $\F_p$ with $\sum_{x \in \F_p} [P(x)]=p$. These are the following. 
\begin{enumerate}[label=(\roman*)]
         \item\label{item:a} $x^{\frac{p-1}{2}}+1$,
    \item\label{item:b} $\frac{p+1}{2}\left( x^{\frac{p-1}{2}}+1 \right)$.
\end{enumerate}
For sake of completeness, the set of polynomials satisfying the conditions of the theorem are the following. 
\begin{enumerate}[label=(\roman*)]
     \item\label{itemaa}  $\pm (x-a)^{\frac{p-1}{2}}+1$,
    \item\label{itembb} $\frac{p+1}{2}\left( \pm  (x-a)^{\frac{p-1}{2}}+1 \right),$
    \end{enumerate}
 where $a \in \F_p$.
 \end{theorem}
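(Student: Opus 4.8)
The plan is to pass from the polynomial $P$ to the distribution of its values. For $j\in\{0,1,\dots,p-1\}$ let $n_j$ denote the number of $x\in\F_p$ with $\tilde P(x)=j$, where $\tilde P(x)$ is the representative of $P(x)$ in $[0,p-1]$. Three constraints are immediate: $\sum_j n_j=p$; the range-sum hypothesis gives $\sum_j j\,n_j=p$; and since $P-j$ has reduced degree $\frac{p-1}{2}$, each value is attained at most $\frac{p-1}{2}$ times, i.e.\ $n_j\le\frac{p-1}{2}$. Subtracting the first two relations yields the key identity $n_0=\sum_{j\ge 2}(j-1)n_j$, so the number of roots of $P$ equals the total excess of the values above $1$. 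First I would record these elementary consequences and observe that the two claimed families realise the distributions $(n_0,n_1,n_2)=(\tfrac{p-1}{2},1,\tfrac{p-1}{2})$ and $(n_0,n_1,n_{(p+1)/2})=(\tfrac{p-1}{2},\tfrac{p-1}{2},1)$ respectively; in both, the value $0$ and one further value are attained the maximal number $\frac{p-1}{2}$ of times.

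The heart of the argument is to show that \emph{every} solution has this feature: there exist two distinct values $c_1\ne c_2$, each attained exactly $\frac{p-1}{2}$ times, i.e.\ $P$ has minimal value set of size three with the two extreme multiplicities maximal. I expect this to be the main obstacle, and the reason for the explicit bound on $p$. The purely combinatorial constraints above are far from sufficient, since distributions with a large value set also satisfy them; one must exploit that $(n_j)$ is realised by a polynomial of degree $\frac{p-1}{2}$. Here I would bring in discrete Fourier analysis: writing $S_t=\sum_{x}\psi(tP(x))$ for a fixed additive character $\psi$, the $n_j$ are recovered from the $S_t$, and one estimates the $S_t$ (or suitable moments of the value distribution) to force the mass of $(n_j)$ onto three values with the required multiplicities. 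Because the Weil bound is essentially vacuous at degree $\frac{p-1}{2}$, these estimates must be done by hand and only become decisive once $p$ is large, which is where the threshold $p>7408848$ enters. An alternative to a purely analytic treatment is to prove that the value set has size three and then invoke the classification of minimal value set polynomials; in that route the extra work is to rule out the Dickson-type competitors and to pin down the multiplicities.

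Once two values $c_1,c_2$ of multiplicity $\frac{p-1}{2}$ are in hand, the argument becomes rigid and algebraic. Both $P-c_1$ and $P-c_2$ split into $\frac{p-1}{2}$ distinct linear factors with common leading coefficient $c_d$, so the $g_i:=c_d^{-1}(P-c_i)$ are monic, squarefree, fully split of degree $\frac{p-1}{2}$, and $g_1-g_2=\kappa$ is a nonzero constant. Their roots are disjoint, so $g_1g_2$ is monic of degree $p-1$ with root set all of $\F_p$ except one element $a$; hence $g_1g_2=(x^p-x)/(x-a)=(x-a)^{p-1}-1$. Combining this with $g_1-g_2=\kappa$ forces $(g_1+g_2)^2=\kappa^2+4g_1g_2=4(x-a)^{p-1}+(\kappa^2-4)$ to be a perfect square in $\F_p[x]$; matching coefficients shows this happens only for $\kappa=\pm2$, giving $g_1,g_2=(x-a)^{\frac{p-1}{2}}\mp 1$ and therefore $P(x)=\lambda(x-a)^{\frac{p-1}{2}}+\mu$ for some $\lambda\ne0$ and $\mu$. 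I would isolate this as a lemma, since it is the clean structural statement underlying the theorem.

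Finally I would substitute $P(x)=\lambda(x-a)^{\frac{p-1}{2}}+\mu$ into the range-sum condition. The value at $x=a$ is $\mu$, while $\mu+\lambda$ and $\mu-\lambda$ are each attained $\frac{p-1}{2}$ times; writing $u,v\in[0,p-1]$ for the representatives of $\mu\pm\lambda$, the condition reads $\tilde\mu+\frac{p-1}{2}(u+v)=p$. For large $p$ this forces $u+v\in\{1,2\}$ (with $u+v=0$ excluded since $\lambda\ne0$), and a short case analysis solves for $(\lambda,\mu)$, producing exactly the four polynomials $\pm(x-a)^{\frac{p-1}{2}}+1$ and $\frac{p+1}{2}\bigl(\pm(x-a)^{\frac{p-1}{2}}+1\bigr)$. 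Since the variable substitution $x\mapsto ax+b$ absorbs the shift by $a$ and, through $a^{\frac{p-1}{2}}=\pm1$, the sign, these four families collapse to the two representatives in the statement, completing the proof.
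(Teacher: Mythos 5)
Your reduction and your algebraic endgame are sound, but the proposal has a genuine gap exactly at the step you yourself flag as ``the main obstacle'': proving that every solution has two values each attained exactly $\frac{p-1}{2}$ times. For this you offer only a plan --- estimate the character sums $S_t=\sum_x\psi(tP(x))$ ``by hand'' to force the mass of $(n_j)$ onto three values --- while conceding that the Weil bound is vacuous at degree $\frac{p-1}{2}$. No mechanism is given by which such estimates would follow, and nothing in your combinatorial constraints ($\sum n_j=\sum jn_j=p$, $n_j\le\frac{p-1}{2}$, $n_0=\sum_{j\ge2}(j-1)n_j$) rules out, say, a distribution with $n_0$ well below $\frac{p-1}{2}$ and the excess spread over many values. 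This is precisely where the paper spends almost all of its effort, and by a different route: it first proves $P$ is fully reducible with $\frac{p-1}{2}$ distinct roots (Newton's identities applied to the power sums of the roots versus the ``excess'' multiset of large values), then derives identities $\sum_l\legendre{a_l-\gamma}{p}=\sum_l\legendre{b_l-\gamma}{p}+r_\gamma$ with $r_\gamma\in\{c,c-p\}$ relating roots, excess values and the leading coefficient $c$, and finally pins $c$ down to $\pm1$ or $\pm\frac{p-1}{2}$ using the Paley-graph intersection numbers together with a Weil/Sz\H{o}nyi-type bound (Proposition~\ref{prop:1/7}) showing that at most $16$ translates $\gamma$ give $\lvert\sum_l\legendre{a_l-\gamma}{p}\rvert\ge p/7$; that is where the threshold $p>7408848$ actually comes from. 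Without a concrete substitute for this machinery, your argument does not get off the ground.

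On the positive side, your rigid algebraic step is correct and is genuinely nicer than the paper's treatment of the corresponding stage: from two values $c_1\ne c_2$ of multiplicity $\frac{p-1}{2}$ you correctly get $g_1g_2=(x-a)^{p-1}-1$, and the identity $(g_1+g_2)^2=(g_1-g_2)^2+4g_1g_2$ with degree bookkeeping does force $g_1-g_2=\pm2$ and hence $P=\lambda(x-a)^{\frac{p-1}{2}}+\mu$; the final range-sum computation $\tilde\mu+\frac{p-1}{2}(u+v)=p$ then correctly isolates the four polynomials. The paper instead handles the cases $c=\pm1$ and $c=\pm\frac{p-1}{2}$ by two separate ad hoc arguments (the substitution $x\mapsto x^2+d$ in the first case, and comparison with an explicit candidate $G$ on more than $\deg$ many points in the second), so your lemma would be a worthwhile simplification \emph{if} the missing multiplicity statement were established --- but as written it is assumed, not proved.
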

We believe that the assumption on the prime $p$ is only a technicality, and the result stands without it.

For further results on the connection between the multiset $M$ and the degree of the corresponding polynomial, we refer to \cite{Biro} (for the case when $M$ consists of $0$s and $1$s), \cite{Gacs2}  (for the case when the degree is at least $p-2$) and \cite{Mura} (for the case when the degree is at least $p-t$ for some $t>p/2$). 

Next we highlight the connection between the degrees of polynomials and the number of determined directions.
The following proposition was proved in \cite{KS}. 
\begin{proposition}\label{prop:+2}
    If $ p-2 \ge d \ge 1$ and the projection polynomial of a set $S\subset \AG(2,p)$ is of degree $d$, then $S$ determines at least $d+2$ directions.
\end{proposition} The proof of this result uses Rédei's polynomial as a core tool. Since the investigation of this paper heavily relies on Proposition \ref{prop:+2}, we cannot avoid the implicit use of the original approach. As it is observed in \cite{somlai}, by combining the result on the sum of values in \cite{somlai} and the one on the degrees of the projection polynomials and the number of directions in \cite{KS}, we obtain a new proof for the Rédei-Megyesi result on the number of directions determined by a set of size $p$ in $\AG(2,p)$. 

Notice 
that the special case of $p$-element sets of the result in \cite{KS}, mentioned in the previous paragraph, can be derived from an earlier result of G\'acs \cite{Gacs_dir}. The projection polynomial in direction $m$ of a subset  $S=\{ (a_i,b_i) \colon i=0,1, \ldots, p-1\}$ of $\F_p^2$ can be written as follows: \[ f_m(t):=p-\sum_{i=0}^{p-1} (b_i-m a_i-t)^{p-1}. \] 
The coefficient of $t^j$ is equal to $(-1)^j \binom{p-1}{j}\sum_i (b_i-ma_i)^{p-1-j}$.

Putting $a_i=x$ and $b_i=f(x)$, we define 
 the polynomials $g_j$ ($1\le j\le p-1$)  as in \cite[Lemma 2.4]{Gacs_dir}, i.e., \[g_j(m):=\sum_{x\in \F_p}(f(x)+mx)^j%=\sum _{i=0}^{a}\binom{a}{i}S_{i,a-i}m^i
 .\] 
Clearly $\deg(g_j)\le  j-1$, since the leading coefficient cancels out in $\F_p$. 
Let $D$ denote the number of directions determined by the set $S$. %Indeed, 
If $-m$ is an undetermined direction of $S$, then $f(x)+mx$ is a bijection and hence the polynomials $g_j(m)=\sum_{u\in \F_p}u^j=0$. Thus if $1\le j\le p-D$, then $g_j$ has more roots than its degree, so $g_j$ is identically zero. Hence $\deg F_m\le (p-1)-(p-D+1)=D-2$, which implies Proposition \ref{prop:+2} for sets of size $p$.

Similar arguments were applied in \cite{GSzL} concerning the direction problem when the order of the field is $p^2$.

It is natural to hope that the characterisation of polynomials of Theorem \ref{thm:main} would also give us back the descriptive result of Lovász and Schrijver \cite{LS} for large enough primes, and indeed as a Fourier analytic application of Theorem \ref{thm:main} we obtain a new proof for the following theorem. 
\begin{theorem}\label{thm:lovaszschrijver}
Assume $p \ge 47$ is a prime which  satisfies the conclusion of Theorem \ref{thm:main}. Then up to an affine transformation, there is a unique set in $\AG(2,p)$ of cardinality $p$ determining exactly $\frac{p+3}{2}$ directions. All of these sets are affine equivalent to
$$\{(0, x): x^{(p-1)/2} = 1\}\cup\{(x, 0): x^{(p-1)/2} = 1\}\cup\{(0, 0)\}.$$ 
\end{theorem}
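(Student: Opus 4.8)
The plan is to derive Theorem~\ref{thm:lovaszschrijver} from the polynomial classification in Theorem~\ref{thm:main} by reconstructing the point set from its projection data. Suppose $\cS \subseteq \AG(2,p)$ has $|\cS| = p$ and determines exactly $\frac{p+3}{2}$ directions; then exactly $(p+1) - \frac{p+3}{2} = \dd$ directions are \emph{not} determined. After an affine transformation I would normalise one non-determined direction to be vertical, so that all first coordinates are distinct and $\cS$ is the graph of a function $f\colon \F_p \to \F_p$. For a slope $m$, let $\pi_m(c) = |\cS \cap \{y = mx + c\}|$ be the intersection multiplicity of $\cS$ with the line of intercept $c$; since $\sum_c \pi_m(c) = p$, lifting the values to $\{0,\dots,p-1\}$ makes the interpolating polynomial $P_m$ of $\pi_m$ a polynomial of range sum $p$, which is exactly the projection polynomial of \cite{KS, somlai} attached to the direction $m$.

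First I would pin down the degree of $P_m$ for a determined slope $m$. On one hand the Kiss--Somlai bound \cite{KS} gives $\frac{p+3}{2} \ge \deg P_m + 2$, i.e. $\deg P_m \le \dd$; on the other hand, because $m$ is determined, $P_m$ is non-constant and has range sum $p$, so Somlai's lower bound \cite{somlai} forces $\deg P_m \ge \dd$. Hence $\deg P_m = \dd$ exactly, and Theorem~\ref{thm:main} applies: up to the admissible affine reparametrisation of the intercept, $P_m$ is one of $\pm(x-a)^{\dd}+1$ or $\frac{p+1}{2}\bigl(\pm(x-a)^{\dd}+1\bigr)$. In particular the multiplicity pattern along the relevant pencils is, up to a shift, the quadratic-residue pattern $1 + \leg{x}{p}$ (value $2$ on residues, $0$ on non-residues, $1$ at the origin), the scaled family being handled separately since it would force roughly $\frac{p+1}{2}$ collinear points, a configuration I would rule out or identify with the same set directly.

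The analytic heart is to reconstruct $\cS$ from these patterns, and here I would pass to discrete Fourier analysis over $\F_p$. Writing $\omega = e^{2\pi i/p}$ and $\widehat{\mathbf{1}_{\cS}}(\xi,\eta) = \sum_{(x,y)\in\cS}\omega^{\xi x + \eta y}$, the key identity is
\begin{equation}
\widehat{\pi_m}(t) \;=\; \sum_{c}\pi_m(c)\,\omega^{tc} \;=\; \widehat{\mathbf{1}_{\cS}}(-tm,\,t),
\end{equation}
so each projection polynomial determines $\widehat{\mathbf{1}_{\cS}}$ along one frequency line through the origin. Evaluating the Fourier transform of the residue pattern via the quadratic Gauss sum $G = \sum_{x}\leg{x}{p}\,\omega^{x}$, with $|G| = \sqrt{p}$, gives $\widehat{\pi_m}(t) = \pm\,\leg{t}{p}\,\omega^{a t}\,G$ for $t \ne 0$ (with a shift $a$ and a sign depending on $m$), while $\widehat{\pi_m}(t) = 0$ for $t\ne 0$ whenever $m$ is a non-determined direction. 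Imposing these constraints simultaneously across all $\frac{p+3}{2}$ determined slopes forces $\widehat{\mathbf{1}_{\cS}}$ to be a fixed Legendre-symbol profile on every frequency line, and inverting the Fourier transform should recover $\cS$ uniquely, up to the affine freedom, as the Lovász--Schrijver configuration of residues on two axes together with the origin.

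I expect the reconstruction step to be the main obstacle. A single projection pattern only fixes $\widehat{\mathbf{1}_{\cS}}$ on one line in frequency space, so the difficulty is to show that the family of patterns arising from all determined directions is mutually consistent and pins down a genuine $\{0,1\}$-valued indicator of a $p$-element set, rather than merely a necessary condition. Managing the signs and shifts in the Gauss-sum evaluations uniformly across directions, and excluding the scaled family \ref{itembb} of Theorem~\ref{thm:main} as yielding no valid planar configuration, are the delicate points; the assumption that $p$ is large enough is inherited directly from the bound $p > 7408848$ required by Theorem~\ref{thm:main}.
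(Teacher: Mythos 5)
Your setup agrees with the paper up to the point where Theorem~\ref{thm:main} is invoked: both arguments use the Kiss--Somlai bound $\deg P_m + 2 \le \frac{p+3}{2}$ together with Somlai's lower bound to conclude that every projection polynomial is either constant or of degree exactly $\dd$, hence of one of the two classified types. But from there your plan diverges in a way that contains a genuine gap, and also a wrong turn.

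The wrong turn is your intention to ``exclude the scaled family \ref{itembb} of Theorem~\ref{thm:main} as yielding no valid planar configuration.'' These polynomials cannot be excluded: they must occur, and in fact occur for exactly two of the $\frac{p+3}{2}$ determined directions. The paper proves this by Plancherel. Writing $M$ for the number of determined directions whose projection polynomial has leading coefficient $\pm 1$, each such direction contributes $|\widehat{1_S}(\xi)|^2 = p/p^2$ to the right-hand side of \eqref{eq:Plancherel} (Gauss sum), each non-determined direction contributes $0$, and each direction of type \ref{itembb} contributes roughly $(p/2)^2/p^2$. Since the left-hand side equals $p$, the only way to balance the identity is $\frac{p+3}{2}-M=2$; if all determined directions were of type \ref{itemaa} the total would be only about $p/2$. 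So the two type-\ref{itembb} directions are forced, and they are precisely what makes the reconstruction easy: each one says that some line in that parallel class contains $\frac{p+1}{2}$ points of $S$, with the remaining points distributed $0$-or-$1$ per line according to the quadratic residue pattern. Two non-parallel lines each carrying $\frac{p+1}{2}$ of the $p$ points must meet in a point of $S$ and must together contain all of $S$, and the residue pattern on each pencil then identifies $S$ with the Lovász--Schrijver configuration.

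The gap is that your ``analytic heart'' --- imposing the Gauss-sum profiles $\widehat{\pi_m}(t)=\pm\legendre{t}{p}\omega^{at}G$ simultaneously on all frequency lines and inverting the Fourier transform to recover a $\{0,1\}$-valued indicator --- is exactly the step you acknowledge you cannot carry out, and it is substantially harder than what is needed. Consistency of the shifts $a$ and signs across $\frac{p+3}{2}$ lines in frequency space is a nontrivial rigidity problem that your proposal does not resolve. The paper avoids it entirely: Plancherel is used only to \emph{count} how many directions fall into each class of Theorem~\ref{thm:main}, and the identification of $S$ is then a two-line combinatorial argument using the heavy lines supplied by the type-\ref{itembb} directions (alternatively, Szőnyi's classification of direction-minimal sets on two lines finishes it). If you replace your Fourier-inversion step by this counting-plus-heavy-lines argument, your proof closes.
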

The previous presentation is borrowed from Szőnyi \cite{around} due to its simplicity. Let us remark that the above result was reproved and extended by Gács \cite{Gacs_dir}.
As an additional benefit we provide an explicit description of the Lovász-Schrijver construction based on projection polynomials listed in Theorem \ref{thm:main}. \\

{At the end of this section, we outline the proof of our results while presenting the organisation of the paper.
In Section 2, we summarize the basic facts concerning polynomials over finite fields and set the main notation of the paper. Next, we present the proof of an auxiliary result concerning the sum of Legendre symbols evaluated on translates of a set of density $1/2$ in Subsection 2.1. This is closely related to the parameters of the Paley graph, and might be of independent interest as well. The core of the proof of our main result, Theorem \ref{thm:main}, is carried out in Section 3.  
First we show that polynomials of degree $\frac {p-1}2$ with range sum $p$ are fully reducible and have distinct roots. Then we prove a list of identities involving the leading coefficient ($\LC$, for brevity) of the polynomial,  the roots and the elements of the range. Using an affine transformation, we may suppose that the $\LC$ is positive and is at most $\frac{p-1}2$. From the identities, we will deduce that the leading coefficient cannot take a value greater than $1$ and smaller than $\frac{p-1}2$. Here we distinguish between three cases depending on the value of the $\LC$, and use several tricks besides a deeper result which is related to the Hasse-Weil bound. At the end of this section, we handle the cases $\LC(P)=1$ and  $\LC(P)=\frac{p-1}2$ separately and show the uniqueness of the polynomial in both cases, up to affine transformation. %\todo{átnézni}
Finally, for large enough primes we derive the Lovász-Schrijver theorem from our result in Section 4 and pose some open problems in Section 5.}

%\section{Notation}
\section{Basic ideas and notation}
Let $p$ denote a prime number.
A multiset $M$ of size $p$ is called the range of a polynomial $P$ if $M=\{[P(x)]: x \in \F_p\}$.  In this case, the elements of the range of $P$  are identified with the corresponding representatives from $\{ 0,1, \ldots, p-1\} \subset \Z$.

For general finite fields $\F_q$, where $q$ is a prime power, any function over $\F_q$ can uniquely be represented by a polynomial  $P$ of degree at most $q-1$. 

\begin{lemma}\label{lem:sumofvalues p-1coeff} Suppose that $P(x)=\sum_{k=0}^{p-2} a_kx^k$ is a polynomial over $\F_p$. Then $$\sum_{x\in \F_p} x^tP(x)=-a_{(p-1)-t}$$ holds over $\F_p$, for any $t\in [1,p-1] \cap \Z .$
\end{lemma}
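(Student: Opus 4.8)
The plan is to reduce the statement to the classical power-sum identity over a finite field. The key ingredient, which I would record first, is that for every integer $m\ge 0$,
\[
\sum_{x\in\GF(q)} x^m =
\begin{cases}
-1 & \text{if } m\ge 1 \text{ and } (q-1)\mid m,\\
0 & \text{otherwise}.
\end{cases}
\]
This is standard: for $m\ge 1$ the term $x=0$ vanishes, and summing over the cyclic group $\GF(q)^{\times}$ of order $q-1$ one evaluates a geometric series that equals zero whenever a generator $g$ satisfies $g^m\ne 1$ (that is, $(q-1)\nmid m$) and equals $q-1\equiv -1$ when $x^m\equiv 1$ for every unit; for $m=0$ one gets $\sum_{x}1=q\equiv 0$.

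Next I would insert the expansion of $P$ and interchange the two finite summations, which is legitimate since both ranges are finite:
\[
\sum_{x\in\GF(q)} x^t P(x)=\sum_{k=0}^{q-1} a_k \sum_{x\in\GF(q)} x^{t+k}.
\]
By the identity above, a summand survives precisely when the exponent $t+k$ is a \emph{positive} multiple of $q-1$, and in that event it contributes exactly $-a_k$. Everything therefore hinges on determining which indices $k\in[0,q-1]$ yield such an exponent.

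This identification is the only delicate point, and I expect it to be the main obstacle. Since $t\in[0,q-1]$ and $k\in[0,q-1]$, the exponent $t+k$ lies in $[0,2q-2]$, so the only positive multiples of $q-1$ that can arise are $q-1$ and $2(q-1)$. The equation $t+k=q-1$ has the unique solution $k=q-t-1$, which lies in the admissible range $[0,q-1]$ for every $t\in[0,q-1]$ and contributes $-a_{q-t-1}$; this is the term appearing in the statement. The equation $t+k=2(q-1)$ forces $k=2(q-1)-t$, which falls in $[0,q-1]$ only in the boundary configuration $t=k=q-1$. Hence for $t$ in the interior range the sole surviving contribution is $-a_{q-t-1}$, proving the claim, while the single endpoint $t=q-1$ is the one instance one should isolate and treat with care. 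I would accordingly arrange the write-up so that the list of admissible exponents is settled before the coefficients are read off, making the final collapse to $-a_{q-t-1}$ immediate.
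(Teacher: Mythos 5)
Your argument is the same as the paper's: the proof in the text consists precisely of the power-sum identity $\sum_{x\in\GF(q)}x^m=0$ for $0\le m\le q-2$ and $\sum_{x\in\GF(q)}x^{q-1}=-1$, followed by the (implicit) expansion and interchange of sums that you carry out explicitly. The one point where you go beyond the paper is worth keeping: your caution at the endpoint $t=q-1$ is justified, because there the index $k=q-1$ produces the exponent $2(q-1)$, and $\sum_{x\in\GF(q)}x^{2(q-1)}=-1$ as well, so in fact $\sum_{x\in\GF(q)}x^{q-1}P(x)=-a_0-a_{q-1}$ rather than $-a_0$. Thus the formula as stated is only valid for $t\in[0,q-2]$ (or when $a_{q-1}=0$); the paper's one-line proof glosses over this. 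The slip is harmless for the paper, since every subsequent application (in particular Corollary~\ref{core}, where the exponent $d$ satisfies $d\le t-1\le q-2$, and the range-sum computations with $t=0$) stays strictly below $q-1$, but your write-up should either restrict the range of $t$ or record the corrected value at the endpoint rather than leave it as an unresolved case.
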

\begin{proof}
Observe that $\sum_{x\in \F_p} x^t=0$ if $1\leq t\leq p-2$ 
and $\sum_{x\in \F_p} x^{p-1}=-1$.
\end{proof}
\begin{cor}\label{core} Suppose that $P$ is a reduced degree polynomial over $\F_p$ and $t$ is an integer with $1 \le t \le p-1$. Then 
    the degree of $P$ is smaller than $p-t$ if and only if $\sum_{x\in \F_p}x^{d}P(x)=0$ holds over $\F_p$, for all $0 \le d \le t-1$.\\
To put it differently, for every non-negative integer $\gamma$ we have $$\sum_{x\in \F_p}x^{\gamma}P(x)\neq 0 \Rightarrow \deg P\geq p-1-\gamma.$$ and $$  \deg P \geq p-1-\gamma\Rightarrow \sum_{x\in\F_p}x^{\delta}P(x)\neq 0 \mbox{ \ for \ some \  } 0 \leq \delta\leq \gamma.$$
\end{cor}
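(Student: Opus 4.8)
The plan is to derive everything directly from Lemma~\ref{lem:sumofvalues p-1coeff}, whose identity $\sum_{x\in\GF(q)} x^t P(x) = -a_{q-t-1}$ converts each power-weighted sum $\sum_{x} x^t P(x)$ into (the negative of) a single coefficient of the reduced polynomial $P=\sum_{k=0}^{q-1} a_k x^k$. The whole corollary is then a matter of bookkeeping the index substitution $t \mapsto q-1-t$, together with the fact that $P$ being reduced forces $\deg P \le q-1$.

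First I would prove the iff-form. Applying the lemma as $d$ runs over $0,1,\ldots,t-1$, the index $q-d-1$ runs over $q-1,q-2,\ldots,q-t$, so the hypothesis that $\sum_{x\in\GF(q)} x^d P(x)=0$ for all $0\le d\le t-1$ is equivalent to $a_{q-1}=a_{q-2}=\cdots=a_{q-t}=0$. Since $P$ is reduced, its highest possible coefficient is $a_{q-1}$; the vanishing of these $t$ top coefficients is exactly the assertion that the highest nonzero coefficient has index at most $q-t-1$, i.e.\ $\deg P < q-t$. This settles both directions of the equivalence simultaneously.

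The two displayed implications are then immediate specialisations. For the first, if $\sum_{x} x^\gamma P(x)\neq 0$ then $a_{q-1-\gamma}\neq 0$ by the lemma, whence $\deg P\ge q-1-\gamma$. For the second, $\deg P\ge q-1-\gamma$ means $a_{k}\neq 0$ for $k=\deg P\ge q-1-\gamma$; setting $\delta=q-1-k$ we have $0\le\delta\le\gamma$ and $\sum_{x} x^\delta P(x)=-a_{k}\neq 0$. Both are just the contrapositive of the iff-form under the substitution $t=\gamma+1$. There is no real obstacle here; the only point requiring care is keeping the index shift $t\mapsto q-1-t$ straight and invoking reducedness, so that ``all top coefficients vanish'' genuinely bounds the degree rather than merely the support.
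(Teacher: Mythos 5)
Your proposal is correct and is exactly the argument the paper intends: the corollary is stated without a separate proof precisely because it is the immediate bookkeeping consequence of Lemma~\ref{lem:sumofvalues p-1coeff} that you carry out, namely that $\sum_{x}x^{d}P(x)=-a_{q-d-1}$ identifies the vanishing of these sums for $0\le d\le t-1$ with the vanishing of the top $t$ coefficients of the reduced polynomial. Your handling of the index shift and of the role of reducedness matches the paper's reasoning, so there is nothing to add.
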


For a pair of sets $A$ and $B$, $A+B$ denotes the Minkowski-sum of the sets, i.e., $x\in A+B$ if and only $x$ is of form $a+b$ for some $a\in A$ and $b\in B$. If $B=\{b\}$ is a singleton, we might omit the brackets for brevity.

Let $S$ be a set in $\F_p^2$ which does not contain a line, and let $m$ be an element of $\F_p \cup \{ \infty \}$. Then the function $f_m$ is defined as follows. Let $f_m(t)$ be the number of elements of $S$ on the line $y=mx+t$. If $m=\infty$, then we take the line as $x=t$. Now $f_m$ can be viewed as a function from $\F_p$ to $\{0,1, \ldots, p-1 \}$. %which we identify here with $\F_p$. 
Thus we obtain a polynomial which we call the $m$-th \textrm{projection polynomial}.

\subsection{Intersections of translates of the set of quadratic residues}

In this subsection we prove a 
combinatorial statement applied in the next section.
It states that if, generally speaking, a system of sets over a given ground set is given which has a relatively rigid intersection pattern, meaning that their $k$-wise intersections are bounded from above via their sizes, then for any large enough set over the ground set we can find a representative from the set system which intersects it in a small subset.
This idea will be applied to a system of translates of the quadratic residues over a finite field $\F_p$, to obtain a key statement, Proposition \ref{1/7}.

We begin with a lemma that is used several times in the next sections. The lemma relies on the knowledge of the number of solutions of any quadratic equation. 
\begin{comment}

\begin{table}[!ht]
    \centering
\[
\begin{matrix}
    \begin{array}{|c|c|c|}
    \hline
   \big(\frac{\gamma}{p} \big) = 1   & p \equiv 1 \pmod{4} &   p \equiv 3 \pmod{4} \\
     %  \hline 
     %  ~ & $\bigg(\frac\gamma p \bigg) = 1 $ & $ \bigg(\frac\gamma p \bigg) = -1$ & $ \bigg(\frac\gamma p \bigg) = 1$ & $\bigg(\frac\gamma p \bigg) = -1$\\
       \hline 
      |A_{1, 1}|  &  \dfrac{p-5}{4} & \dfrac{p-3}{4} \\
      \hline
      |A_{1, -1}|   &  \dfrac{p-1}{4} & \dfrac{p+1}{4} \\
      \hline
      |A_{-1, 1}|  &  \dfrac{p-1}{4} & \dfrac{p-3}{4} \\
      \hline
      |A_{-1, -1}|   &  \dfrac{p-1}{4} & \dfrac{p-3}{4} \\
  
        \hline
    \end{array} 
&
 \begin{array}{|c|c|c|}
   \hline
   \big(\frac\gamma p \big) = -1  & p \equiv 1 \pmod{4} &   p \equiv 3 \pmod{4} \\
     %  \hline 
     %  ~ & $\bigg(\frac\gamma p \bigg) = 1 $ & $ \bigg(\frac\gamma p \bigg) = -1$ & $ \bigg(\frac\gamma p \bigg) = 1$ & $\bigg(\frac\gamma p \bigg) = -1$\\
       \hline 
      |A_{1, 1}|   &  \dfrac{p-1}{4} & \dfrac{p-3}{4} \\
      \hline 
      |A_{1, -1}|  &  \dfrac{p-1}{4} & \dfrac{p-3}{4} \\
      \hline 
      |A_{-1, 1}|   &  \dfrac{p-1}{4}& \dfrac{p+1}{4} \\
      \hline 
      |A_{-1, -1}|   &  \dfrac{p-5}{4} & \dfrac{p-3}{4} \\
      \hline
    \end{array}
\end{matrix}
  \]  
    
    \caption{Intersection size of translates of quadratic (non)residues.}
    \label{tqr}
\end{table}
\end{comment}
In Lemma \ref{lem:paleyparameters}, we use the character notation for the Legendre symbol. This is done because, with appropriate modifications, this statement can obviously be extended to characters as well.
\begin{lemma}\label{lem:paleyparameters}
	Let $p$ be an odd prime, let $\chi=\left(\frac{\cdot}{p}\right)$ be the Legendre symbol
	extended by $\chi(0)=0$, and let
	$
	K \subseteq \F_p$ with  $|K|=\frac{p-1}{2}$.
	For $\gamma\in\F_p$, put
	\[
	S(\gamma):=\sum_{k\in K}\chi(k-\gamma).
	\]
Suppose that for some $\gamma_0\in\F_p$ and some $0<t \le \frac{p-3}{4}$ one has
	\[
	|S(\gamma_0)|=\frac{p+1}{4}+t.
	\]
	Then for every $\gamma\neq \gamma_0$,
	\[
	|S(\gamma)|\le \frac{p+1}{4}-t.
	\]
\end{lemma}

\begin{proof}
	We first prove the following pairwise estimate: for any distinct $\gamma,\gamma'\in\F_p$ and any
	$\varepsilon,\eta\in\{\pm1\}$,
	\begin{equation}\label{eq:master}
	\varepsilon S(\gamma)+\eta S(\gamma')\le \frac{p-\varepsilon\eta}{2}.
	\end{equation}
	
	Fix distinct $\gamma,\gamma'\in\F_p$, and write $d=\gamma'-\gamma\neq 0$.
	For $\varepsilon,\eta\in\{\pm1\}$, let
	\[
	N_{\varepsilon,\eta}:=
	\#\{x\in \F_p:\chi(x-\gamma)=\varepsilon,\ \chi(x-\gamma')=\eta\}.
	\]
	For each $k\in K$, the quantity
	\[
	\varepsilon\chi(k-\gamma)+\eta\chi(k-\gamma')
	\]
	is at most $2$, and it is equal to $2$ precisely when
	$
	\chi(k-\gamma)=\varepsilon \mbox{ and } \chi(k-\gamma')=\eta.
	$
	Hence
	\begin{equation}\label{eq:first-bound}
	\varepsilon S(\gamma)+\eta S(\gamma')
	\le
	2N_{\varepsilon,\eta}
	+\max\{\varepsilon\chi(d),0\}
	+\max\{\eta\chi(-d),0\}.
	\end{equation}
	Indeed, the only exceptional points are $k=\gamma'$ and $k=\gamma$, where one of the two
	Legendre symbols vanishes.
	
	We now compute $N_{\varepsilon,\eta}$. For $\sigma\in\{\pm1\}$ define
	\[
	\mathbbm{1}_\sigma(x):=\frac{1+\sigma\chi(x)-\mathbf{1}_{\{0\}}(x)}{2},
	\]
	where $\mathbf 1_{\{d\}}(x)=1$ if $x=d$ and $\mathbf 1_{\{d\}}(x)=0$ otherwise.
	Then $\mathbbm{1}_\sigma(x)$ is the indicator function of the set of $x\in\F_p$ such that $\chi(x)=\sigma$.
	Therefore
	\[
	N_{\varepsilon,\eta}
	=
	\sum_{x\in\F_p}\mathbbm{1}_\varepsilon(x)\,\mathbbm{1}_\eta(x-d).
	\]
	Expanding gives
	\[
	4N_{\varepsilon,\eta}
	=
	\sum_{x\in\F_p}
	\bigl(1+\varepsilon\chi(x)-\mathbf 1_{\{0\}}(x)\bigr)
	\bigl(1+\eta\chi(x-d)-\mathbf 1_{\{d\}}(x)\bigr).
	\]

	Using
	\[
	\sum_{x\in\F_p}\chi(x)=\sum_{x\in\F_p}\chi(x-d)=0,
	\qquad
	\sum_{x\in\F_p}\mathbf 1_{\{0\}}(x)=\sum_{x\in\F_p}\mathbf 1_{\{d\}}(x)=1,
	\]
	\[
	\sum_{x\in\F_p}\chi(x)\mathbf 1_{\{d\}}(x)=\chi(d),
	\qquad
	\sum_{x\in\F_p}\mathbf 1_{\{0\}}(x)\chi(x-d)=\chi(-d),
	\qquad
	\sum_{x\in\F_p}\mathbf 1_{\{0\}}(x)\mathbf 1_{\{d\}}(x)=0,
	\]
	we obtain
	\begin{equation}\label{eq:Paley-s}
		4N_{\varepsilon,\eta}
	=
	p-2-\varepsilon\chi(d)-\eta\chi(-d)
	+\varepsilon\eta\sum_{x\in\F_p}\chi(x)\chi(x-d). 
	\end{equation}
	We note the following classical result, which is also found in \cite{Lidl}
	\[
\sum_{x\in\F_p}\chi(x)\chi(x-d)=-1 \quad \mbox{if} \quad d \ne 0.
	\]

Consequently,
\begin{equation}\label{eq:Paley_vegso}
	4N_{\varepsilon,\eta}
	=
	p-2-\varepsilon\chi(d)-\eta\chi(-d)-\varepsilon\eta.
	\end{equation}
The value $N_{\epsilon,\eta}$ depends on $d$ and $\left(\frac{-1}{p} \right)$, so we collect the possible values of $N_{\epsilon,\eta}$ in  Table \ref{tab:tqr}. 
    \begin{table}[!ht]
    \centering
\[
\begin{matrix}
\begin{array}{|c|c|c|}
\hline
\big(\frac{\gamma'-\gamma}{p}\big)=1
& p\equiv 1 \pmod 4
& p\equiv 3 \pmod 4 \\[5pt]
\hline
N_{1,1}   & \dfrac{p-5}{4} & \dfrac{p-3}{4} \\[5pt]
\hline
N_{1,-1}  & \dfrac{p-1}{4} & \dfrac{p-3}{4} \\[5pt]
\hline
N_{-1,1}  & \dfrac{p-1}{4} & \dfrac{p+1}{4} \\[5pt]
\hline
N_{-1,-1} & \dfrac{p-1}{4} & \dfrac{p-3}{4} \\[5pt]
\hline
\end{array}
&
\begin{array}{|c|c|c|}
\hline
\big(\frac{\gamma'-\gamma}{p}\big)=-1
& p\equiv 1 \pmod 4
& p\equiv 3 \pmod 4 \\[5pt]
\hline
N_{1,1}   & \dfrac{p-1}{4} & \dfrac{p-3}{4} \\[5pt]
\hline
N_{1,-1}  & \dfrac{p-1}{4} & \dfrac{p+1}{4} \\[5pt]
\hline
N_{-1,1}  & \dfrac{p-1}{4} & \dfrac{p-3}{4} \\[5pt]
\hline
N_{-1,-1} & \dfrac{p-5}{4} & \dfrac{p-3}{4} \\[5pt]
\hline
\end{array}
\end{matrix}
\] 
    
    \caption{Intersection sizes of translates of quadratic (non)residues.}
    \label{tab:tqr}
\end{table}

\color{black}
Substituting this into \eqref{eq:first-bound}, we obtain
	\[
	\varepsilon S(\gamma)+\eta S(\gamma')
	\le
	\frac{p-2-\varepsilon\eta}{2}
	+\frac{-\varepsilon\chi(d)+2\max\{\varepsilon\chi(d),0\}}{2}
	+\frac{-\eta\chi(-d)+2\max\{\eta\chi(-d),0\}}{2}.
	\]
	Finally, for every $a\in\{\pm1\}$ one has
	$
	-a+2\max\{a,0\}=1.
	$
	Applying this twice yields
	\[
	\varepsilon S(\gamma)+\eta S(\gamma')
	\le
	\frac{p-2-\varepsilon\eta}{2}+\frac12+\frac12
	=
	\frac{p-\varepsilon\eta}{2},
	\]
	which proves \eqref{eq:master}.

Assume that
	\[
	\left|S(\gamma_0) \right|=\frac{p+1}{4}+t
	\qquad \left(0<t\le \frac{p-3}{4} \right),
	\]
	and let $\gamma\neq\gamma_0$.
	Choose $\varepsilon,\eta\in\{\pm1\}$ such that
	\[
	\varepsilon S(\gamma_0)=|S(\gamma_0)|,\qquad \eta S(\gamma)=|S(\gamma)|.
	\]
	Then \eqref{eq:master} gives
	\[
	|S(\gamma_0)|+|S(\gamma)|
	=
	\varepsilon S(\gamma_0)+\eta S(\gamma)
	\le
	\frac{p-\varepsilon\eta}{2}
	\le
	\frac{p+1}{2}.
	\]
	Therefore
	\[
	|S(\gamma)|
	\le
	\frac{p+1}{2}-|S(\gamma_0)|
	=
	\frac{p+1}{4}-t,
	\]
	as required.
\end{proof}

\color{black}
As in the previous lemma, let $\chi(x) = +1$ if $x\in \ff_q$ is a
nonzero square, $\chi(x)=-1$ if $x$ is a non-square and $\chi(0) =0$.
The lemma below is due to Szőnyi and is proved via the estimation of sums of multiplicative characters.

\begin{lemma}[Szőnyi, \cite{Szonyi_Weil}]\label{weil}
Let $q$ be an odd prime power and let $f_1, f_2, \ldots ,f_m\in \ff_q[x]$ be polynomials for which no partial product of form $\prod\limits_{i\in I, I\subseteq [1,m]} f_i(x)$ can be  written as a
constant multiple of a square of a polynomial.

Suppose that $ 2^{m-1} \sum_{i=1}^m \deg(f_i) < {\sqrt{q}- 1}$. Let $\textbf{v}\in \{+1, -1\}^m$, and  $N(\textbf{v})$ denote the number of elements $y\in \ff_q$ for which  $\chi(f_i(y))=v_i$  for every $i = 1, \ldots, m$. Then 
\begin{equation}\label{eq:szonyi}
    \left|N(\textbf{v})-\frac{q}{2^m} \right|< \sum_{i=1}^m \deg(f_i)\frac{\sqrt{q}+ 1}{2}.
\end{equation}  
\end{lemma}
We apply this result to linear polynomials of the form $f_i(x)=x+r_i$. Let $Q$ denote the nonzero square elements in $\ff_q$. Then Lemma \ref{weil} states that we can estimate the size of not only the intersection of translates $\bigcap_i (Q-r_i)$ but we can take the complement of each translate independently from each other.

Let $A_k \mbox{ } (k \in K)$ denote a collection of subsets of $\F_p$, where $K$ is a finite index set, and let $\bar{A_i}$ denote the complement of $A_i$. For a subset $I$ of $K$  let $E_I:=\left( \bigcap_{k \in I} A_k \right) \cap \left( \bigcap_{l \in K\setminus I} \overline{A_l} \right)$. 
\begin{lemma} Let $A_k$ be the set of nonzero square elements of $\F_p$ translated by pairwise distinct elements $r_k$ for $k\in K:=\{1, 2, \ldots, t\}$, where $t2^t<0.5\sqrt p-1$. Then 
     $$|E_I|=\left|\left( \bigcap_{k\in I} A_k \right) \cap \left( \bigcap_{l \in K \setminus I} \overline{A_l}\right) \right|<\frac{p}{2^t}+t\frac{\sqrt{p}+1}{2}+t.$$
\end{lemma}
\begin{proof}
    The statement follows from Lemma \ref{weil}. The extra factor $t$ comes from the fact that $\overline{A_l}$ contains $r_l$, while these elements are not counted in equation \eqref{eq:szonyi}.
  \end{proof}

Note that the former  statement is suggested as an exercise in the handbook of Lidl and Niederreiter \cite{Lidl},  see Ex. 5.64.
\begin{lemma}\label{halmazmetszetek}
Let $X$ be a finite set, and let $A_1,\dots,A_t,B\subseteq X$.
For each $I\subseteq [t]$, define
\[
E_I:=
\left(\bigcap_{i\in I}A_i\right)
\cap
\left(\bigcap_{j\in [t]\setminus I}(X\setminus A_j)\right).
\]
Then for every integer $1\le \hat r\le t$,
\[
\min_{i\in [t]} |A_i\cap B|
\le
\frac1t\left(
\sum_{r=\hat r}^t
\left|\bigcup_{|I|\ge r}E_I\right|
+
(\hat r-1)|B|
\right).
\]
\end{lemma}

\begin{proof}
The sets $E_I$, $I\subseteq [t]=\{1,2,\ldots ,t \}$, are pairwise disjoint. Indeed, if
$I\neq J$, choose $k\in I\triangle J$. Then one of $E_I,E_J$ is contained
in $A_k$, while the other is contained in $X\setminus A_k$.

Moreover, for each $k\in [t]$,
\[
A_k=\bigcup_{\substack{I\subseteq [t]\\ k\in I}}E_I.
\]
Consequently,
\[
A_k\cap B=
\bigcup_{\substack{I\subseteq [t]\\ k\in I}}(E_I\cap B),
\]
which is a union of pairwise disjoint sets. Hence
\[
|A_k\cap B|
=
\sum_{\substack{I\subseteq [t]\\ k\in I}} |E_I\cap B|.
\]
Summing over $k=1,\dots,t$, we get
\[
\sum_{k=1}^t |A_k\cap B|
=
\sum_{I\subseteq [t]} |E_I\cap B|\cdot |I|.
\]
Now use the identity
$
|I|=\sum_{r=1}^t \mathbf 1_{ \left\{ J \subset [t] \colon |J| \ge r \right\} }$, where $\mathbf{1}_H$ denotes the characteristic function of the set $H$ so $\mathbf{1}_H(x)=1$ if $x\in H$ and $\mathbf{1}_H(x)=0$ if $x\not\in H$. 
To obtain
\[
\sum_{I\subseteq [t]} |E_I\cap B|\cdot |I|
=
\sum_{r=1}^t
\sum_{\substack{I\subseteq [t]\\ |I|\ge r}}
|E_I\cap B|.
\]
Since the sets $E_I$ are pairwise disjoint, for each fixed $r$ we have
\[
\sum_{\substack{I\subseteq [t]\\ |I|\ge r}}
|E_I\cap B|
=
\left|
\left(\bigcup_{|I|\ge r}E_I\right)\cap B
\right|.
\]
Put
\[
H_r:=\bigcup_{|I|\ge r}E_I.
\]
Thus
\[
\sum_{k=1}^t |A_k\cap B|
=
\sum_{r=1}^t |H_r\cap B|.
\]

We now split the sum at $\hat r$. For $1\le r<\hat r$, we use
\[
|H_r\cap B|\le |B|,
\]
and for $\hat r\le r\le t$, we use
\[
|H_r\cap B|\le |H_r|.
\]
Hence
\[
\sum_{k=1}^t |A_k\cap B|
\le
(\hat r-1)|B|+\sum_{r=\hat r}^t |H_r|.
\]
Substituting the definition of $H_r$, this becomes
\[
\sum_{k=1}^t |A_k\cap B|
\le
(\hat r-1)|B|
+
\sum_{r=\hat r}^t
\left|\bigcup_{|I|\ge r}E_I\right|.
\]
Finally,
\[
\min_{i\in[t]} |A_i\cap B|
\le
\frac1t\sum_{k=1}^t |A_k\cap B|,
\]
so
\[
\min_{i\in [t]} |A_i\cap B|
\le
\frac1t\left(
\sum_{r=\hat r}^t
\left|\bigcup_{|I|\ge r}E_I\right|
+
(\hat r-1)|B|
\right).
\]
This proves the lemma.
\end{proof}

\begin{cor}\label{Cory0}
Let $p$ be an odd prime, and put
\[
Q_0:=\left\{x\in\F_p:\left(\frac{x}{p}\right)\in\{0,1\}\right\},
\qquad
N_0:=\left\{x\in\F_p:\left(\frac{x}{p}\right)\in\{0,-1\}\right\}.
\]
Let $A_1,\dots,A_8$ be eight pairwise distinct translates of $Q_0$, or eight
pairwise distinct translates of $N_0$. Let $B\subseteq\F_p$ with
\[
|B|=\frac{p-1}{2}.
\]
Assume that $p$ is large enough so that Lemma~2.4 applies to eight distinct
linear polynomials; for instance, $p>1025^2$ is sufficient. Then
\[
\min_{i\in[8]} |A_i\cap B|
<
\frac18\left(
140\left(\frac{p}{2^8}+8\frac{\sqrt p+1}{2}\right)
+32+2(p-1)
\right).f
\]
\end{cor}
\begin{proof} We prove the statement for translates of $Q_0$; the proof for translates of $N_0$ is identical. 
Write \[ A_i=Q_0+\rho_i, \qquad i=1,\dots,8, \] where the $\rho_i$'s are pairwise distinct elements of $\F_p$. For $I\subseteq[8]$, let \[ E_I:= \left(\bigcap_{i\in I}A_i\right) \cap \left(\bigcap_{j\in[8]\setminus I}(\F_p\setminus A_j)\right). \] 
For $r=1,\dots,8$, put \[ H_r:=\bigcup_{|I|\ge r}E_I. \] By Lemma~\ref{halmazmetszetek}, applied with $t=8$ and $\hat r=5$, we have \[ \min_{i\in[8]} |A_i\cap B| \le \frac18\left( \sum_{r=5}^8 |H_r|+4|B| \right). \] Since $|B|=(p-1)/2$, it remains to estimate $\sum_{r=5}^8 |H_r|$. Let \[ Z:=\{\rho_1,\dots,\rho_8\}. \] For $x\notin Z$, membership in $A_i=Q_0+\rho_i$ is equivalent to \[ \left(\frac{x-\rho_i}{p}\right)=1. \] 
Thus, outside $Z$, the set $H_r$ is covered by those sign patterns of the eight Legendre symbols \[ \left(\frac{x-\rho_1}{p}\right),\dots, \left(\frac{x-\rho_8}{p}\right) \] that have at least $r$ entries equal to $1$. By Lemma~2.4, each such sign pattern has fewer than \[ \frac{p}{2^8}+8\frac{\sqrt p+1}{2} \] solutions. The exceptional set $Z$ contributes at most $8$ points to each $H_r$. Hence \[ |H_r| < \left(\sum_{s=r}^8\binom{8}{s}\right) \left(\frac{p}{2^8}+8\frac{\sqrt p+1}{2}\right) +8. \] Summing this for $r=5,6,7,8$, and using \[ \sum_{r=5}^8\sum_{s=r}^8\binom{8}{s} = \sum_{i=0}^3\binom{8}{i} +\sum_{i=0}^2\binom{8}{i} +\sum_{i=0}^1\binom{8}{i} +1 = 140, \] we get \[ \sum_{r=5}^8 |H_r| < 140\left(\frac{p}{2^8}+8\frac{\sqrt p+1}{2}\right)+32. \] Substituting this into the estimate from Lemma~\ref{halmazmetszetek} gives the claim. \end{proof}

\begin{proposition}\label{1/7} Let \[ B=\{\alpha_1,\dots,\alpha_{(p-1)/2}\}\subseteq\F_p, \] where $p\ge 520219910$ is a prime. Then \[ \#\left\{ \gamma\in\F_p ~\colon ~
\left| \sum_{\ell=1}^{(p-1)/2} \left(\frac{\alpha_\ell-\gamma}{p}\right) \right| \ge \frac p7 \right\} \le 16. \] \end{proposition} 
\begin{proof} 
Put \[ S_B(\gamma):= \sum_{\alpha\in B}\left(\frac{\alpha-\gamma}{p}\right), \qquad \gamma\in\F_p. \] 
We first prove that \[ \#\left\{\gamma\in\F_p:S_B(\gamma)\ge \frac p7\right\}\le 7. \] As before, let \[ Q:=\left\{x\in\F_p^\times:\left(\frac{x}{p}\right)=1\right\}, \qquad Q_0:=Q\cup\{0\}. \] 
For every $\gamma\in\F_p$, we have \begin{equation}\label{eq:SBQ00} S_B(\gamma) = 2|(Q_0+\gamma)\cap B|-\frac{p-1}{2}-\mathbf 1_B(\gamma). \end{equation} 
Indeed, \[ \mathbf 1_{Q_0+\gamma}(\alpha) = \frac{1+\left(\frac{\alpha-\gamma}{p}\right)+\mathbf 1_{\{\gamma\}}(\alpha)}{2}, \] 
and summing over $\alpha\in B$ gives \eqref{eq:SBQ00}. 
Thus, if $S_B(\gamma)\ge p/7$, then \[ |(Q_0+\gamma)\cap B| = \frac12\left(\frac{p-1}{2}+S_B(\gamma)+\mathbf 1_B(\gamma)\right) \ge \frac{9p-7}{28}. \] 
Assume, for contradiction, that there are eight distinct elements \[ \gamma_1,\dots,\gamma_8\in\F_p \] such that \[ S_B(\gamma_i)\ge \frac p7 \qquad (i=1,\dots,8). \] 
Then \[ |(Q_0+\gamma_i)\cap B|\ge \frac{9p-7}{28} \qquad (i=1,\dots,8). \] 
Applying Corollary~\ref{Cory0} to the eight distinct translates \[ Q_0+\gamma_1,\dots,Q_0+\gamma_8, \] we get some $i\in[8]$ such that \[ |(Q_0+\gamma_i)\cap B| < \frac18\left( 140\left(\frac{p}{2^8}+8\frac{\sqrt p+1}{2}\right) +32+2(p-1) \right). \] 
For $p\ge 520219910$, the right-hand side is smaller than \[ \frac{9p-7}{28}, \] a contradiction. 
Therefore \[ \#\left\{\gamma\in\F_p:S_B(\gamma)\ge \frac p7\right\}\le 7. \] The negative tail is analogous. Let \[ N:=\left\{x\in\F_p^\times:\left(\frac{x}{p}\right)=-1\right\}, \qquad N_0:=N\cup\{0\}. \] 
For every $\gamma\in\F_p$, we have \begin{equation}\label{eq:SBN0} -S_B(\gamma) = 2|(N_0+\gamma)\cap B|-\frac{p-1}{2}-\mathbf 1_B(\gamma), \end{equation} because \[ \mathbf 1_{N_0+\gamma}(\alpha) = \frac{1-\left(\frac{\alpha-\gamma}{p}\right)+\mathbf 1_{\{\gamma\}}(\alpha)}{2}. \] 
Hence, if $S_B(\gamma)\le -p/7$, then \[ |(N_0+\gamma)\cap B|\ge \frac{9p-7}{28}. \] 
Repeating the previous argument with translates of $N_0$, we obtain \[ \#\left\{\gamma\in\F_p:S_B(\gamma)\le -\frac p7\right\}\le 7. \] 
Combining the two estimates gives the stronger bound \[ \#\left\{\gamma\in\F_p:|S_B(\gamma)|\ge \frac p7\right\}\le 14. \] In particular, the claimed bound $16$ follows. \end{proof} 
\begin{remark} The numerical assumption $p\ge 520219910$ is sufficient because \[ \frac{9p-7}{28} > \frac18\left( 140\left(\frac{p}{2^8}+8\frac{\sqrt p+1}{2}\right) +32+2(p-1) \right) \] is equivalent to \[ \frac{11}{3584}p-70\sqrt p-74>0, \] which holds for every $p\ge 520219910$. \end{remark}

\section{Proof of the main result}
Let us assume that $P$ is a polynomial whose range sum is $p$.  Recall that range sum refers to the sum of the admitted values of the polynomial $P$ with multiplicity, evaluated over all elements of a given prime field $\F_p$, where the admitted values are considered as elements from  $\{0, 1, \dots , p-1\}$. In the sequel we write $\sum_{x \in \F_p } [P(x)]=p$, where $[P(x)]$ is an integer between 0 and $p-1$.
We have already seen that $P$ is the constant 1 polynomial or its degree is at least $\dd$. The main goal of this section is to characterise those polynomials $P$ whose range sum is equal to $p$ and which are also of degree exactly $\dd$. As a consequence of this section, we establish the description of the polynomials given in Theorem \ref{thm:main}.

\subsection{Roots of a polynomial of degree \texorpdfstring{$\dd$}{}} 
The main purpose of this section is to prove that $P$ is fully reducible.
\begin{prop}\label{prop:fullyred}
Let $P$ be a polynomial of degree $\dd$ with $\sum_{x \in \F_p } [P(x)]=p$. %Assume further that $deg(P)=\dd$. 
Then $P$ is fully reducible.  Moreover, $P$ has $\dd$ different roots.       
\end{prop}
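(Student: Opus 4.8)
The plan is to reduce the whole statement to a single root count. Since $\deg P=\dd$, a reduced-degree polynomial of this degree has at most $\dd$ roots in $\F_p$; and if it happens to have exactly $\dd$ \emph{distinct} roots, then (counting roots with multiplicity) each must be simple and there can be no further roots, so $P=\LC(P)\prod_{i=1}^{\dd}(x-r_i)$ with the $r_i$ pairwise distinct. This is precisely ``fully reducible with $\dd$ different roots''. Hence it suffices to prove that $P$ vanishes at least $\dd$ times, i.e.\ that $n_0:=|\{x\in\F_p:P(x)=0\}|\ge\dd$; since automatically $n_0\le\dd$, this forces equality and settles both claims at once.

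To get at $n_0$ I would record the value distribution. Writing $n_c=|\{x\in\F_p:P(x)=c\}|$ for $c\in\{0,1,\dots,p-1\}$, the size of the domain gives $\sum_c n_c=p$, while the range-sum hypothesis gives $\sum_c c\,n_c=p$. Subtracting the two yields the bookkeeping identity $n_0=\sum_{c\ge2}(c-1)n_c$. On the algebraic side, each shifted polynomial $P-c$ still has degree $\dd$, so $n_c\le\dd$ for every $c$; this is where the degree hypothesis first enters, through Lemma~\ref{lem:sumofvalues p-1coeff} and Corollary~\ref{core}, which in addition tell us that all weighted power sums $\sum_{x}x^{e}P(x)$ vanish for $0\le e\le\frac{p-3}{2}$ (a fact I expect to need in the final push). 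Since $c-1\ge1$ for $c\ge2$ we have $\sum_{c\ge2}n_c\le n_0$, whence $n_1=p-n_0-\sum_{c\ge2}n_c\ge p-2n_0$; combined with $n_1\le\dd$ this already yields $n_0\ge\frac{p+1}{4}$, so in particular $P$ is far from constant and has many roots.

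The main obstacle is upgrading this estimate from $\frac{p+1}{4}$ to the exact value $\dd$. Pure counting cannot achieve it: the linear relaxation of the constraints $\sum_c n_c=\sum_c c\,n_c=p$, $0\le n_c\le\dd$ is minimised at $n_0=\frac{p+1}{4}$ (take $n_1=\dd$ and pile the remaining mass on $c=2$), so any argument using only the per-value degree bound must stop short of the truth. The extra leverage has to come from the fact that $P$ is an honest polynomial of degree $\dd$, not merely a function with small level sets. My plan is to argue by contradiction: assuming $n_0\le\frac{p-3}{2}$, the identity above forces the level sets $\{P=1\}$ and $\{P=2\}$ to be large, i.e.\ close to the maximal size $\dd$, so that $P-1$ (respectively $P-2$) is nearly, or exactly, fully reducible. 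I would then build a Rédei-type fully reducible polynomial from these large level sets and play its factorisation against the vanishing power sums of Corollary~\ref{core} and the precise multiplicities, aiming to contradict $\sum_c c\,n_c=p$. I expect this interplay between the exact range sum, the degree-forced bounds $n_c\le\dd$, and the algebraic constraints on $P$ to be the technical heart of the proof.

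Once $n_0\ge\dd$ is secured, the reduction in the first paragraph makes full reducibility and the distinctness of the $\dd$ roots immediate, with the ``moreover'' requiring no further work.
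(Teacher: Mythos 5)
Your reduction to the single inequality $n_0\ge\dd$ is correct and is also how the paper proceeds, and the elementary bound $n_0\ge\frac{p+1}{4}$ is fine as far as it goes. But the proof stops exactly where the real work begins: your third paragraph is a plan, not an argument. You yourself label the passage from $\frac{p+1}{4}$ to $\dd$ as ``the technical heart'' that you ``expect'' to work, and the one concrete assertion made there --- that $n_0\le\frac{p-3}{2}$ forces both level sets $\{P=1\}$ and $\{P=2\}$ to be large --- is false: the distribution $n_0=n_2=\frac{p-3}{2}$, $n_1=3$ satisfies all of your counting constraints, so $\{P=1\}$ can be tiny and no R\'edei-type polynomial built from ``large level sets'' is available in general. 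As written, the proposal does not prove the proposition.

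The paper closes this gap with a short mechanism you did not find (Proposition~\ref{prop:22}). Translate so that $P(0)=0$, let $a_1=0,a_2,\dots,a_j$ be the roots, and let $b_1,\dots,b_j$ be the multiset in which each $x$ with $P(x)\ge 1$ occurs with multiplicity $P(x)-1$; its total size is $j$ precisely because the range sum is $p$. Writing $P(x)=1+(P(x)-1)$ pointwise gives $\sum_{x}x^kP(x)=\sum_x x^k-\sum_l a_l^k+\sum_l b_l^k$. If $\deg P<p-1-j$, then by Corollary~\ref{core} these moments vanish for all $0\le k\le j$, and $\sum_x x^k=0$ for such $k$, so the power sums of the $a_l$ and of the $b_l$ agree up to order $j$; Newton's identities (legitimate here since $j<p$) then force their elementary symmetric functions to agree as well. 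But $\sigma_j(a_1,\dots,a_j)=0$ because $a_1=0$, so $\sigma_j(b_1,\dots,b_j)=0$, so some $b_{l'}=0$, i.e.\ $P(0)>1$, contradicting $P(0)=0$. Applying this with $i$ equal to the number of roots of your degree-$\dd$ polynomial yields $n_0\ge\dd$ at once. This moment--Newton comparison is the missing ingredient; as you correctly observed, the counting constraints alone provably cannot reach $\dd$.
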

For a function $P$, we denote by $Supp(P)=\{x\in \F_p \mid [P(x)] \neq 0 \}$ the support of $P$.
In order to prove Proposition \ref{prop:fullyred} we prove the following. 
%\begin{const}\label{constr:posszegu} Let $f(x)\in \{\pm x^{\frac{p-1}{2}}+1, (\frac{p+1}{2})(\pm x^{\frac{p-1}{2}}+1)\}$ Then $f$ has minimal degree and $\sum_x f(x)=p$.
%\end{const}
%\begin{lemma}\label{lem:somofvaluesmodp}
 %   Let $m(x)=a_0+a_1x+\ldots + a_{\frac{p-1}{2}}x^{\frac{p-1}{2}}$. Then 
  %  \[
   % \sum_{x \in \F_p} m(x) \equiv -a_{\frac{p-1}{2}} \pmod{p}
   %    \]
% \end{lemma}
\begin{prop}\label{prop:22} Suppose that  $P$ is a nonconstant polynomial satisfying $\sum_{x\in \F_p} [P(x)]=p$ and $0<|\{y\in \F_p: P(y)=0 \}|\leq i<p-1$. Then $\deg P \geq p-1-i$. In other words, the conditions imply $\deg P \ge |Supp(P)|-1$. 
\end{prop}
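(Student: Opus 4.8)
The plan is to reduce the statement, via Corollary~\ref{core}, to the nonvanishing of some low-order power moment of $P$, and then to extract a contradiction from the integrality of the range sum by means of Newton's identities. Let $z:=|\{y\in\F_p:P(y)=0\}|$ denote the exact number of roots. Since $z\le i$ and hence $p-1-z\ge p-1-i$, it suffices to prove the stronger inequality $\deg P\ge p-1-z$. By Corollary~\ref{core} (with $q=p$), this will follow as soon as I exhibit an exponent $\gamma\in\{0,1,\dots,z\}$ with $\sum_{x\in\F_p}x^{\gamma}P(x)\neq 0$ in $\F_p$, for then $\deg P\ge p-1-\gamma\ge p-1-z$. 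I will argue by contradiction, assuming that all these moments vanish. Note first that if $z=0$ then the $p$ values of $P$ are all positive integers summing to $p$, forcing $P\equiv 1$; this is the degenerate constant case, so I may and do assume $z\ge 1$.

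Next I set $g:=P-1$, regarded as an integer-valued function on $\F_p$ with values in $\{-1,0,\dots,p-2\}$. Because $\sum_{x}x^{\gamma}=0$ for $1\le\gamma\le p-2$ and $\sum_{x}P(x)\equiv 0\pmod p$, the assumed vanishing of the moments of $P$ for $\gamma=0,\dots,z$ is equivalent to $\sum_{x}x^{\gamma}g(x)\equiv 0\pmod p$ for the same range of $\gamma$ (here I use $z\le i<p-1$, so all exponents stay in $[1,p-2]$). Now $g$ equals $-1$ precisely on the root set $Z$ and is a nonnegative integer on $\mathrm{supp}(P)$. Writing $A$ for the multiset in which each $u\in\mathrm{supp}(P)$ is taken with multiplicity $P(u)-1$, and $B:=Z$ (each element once), the crucial counting fact is that both $A$ and $B$ have exactly $z$ elements, since
$$\sum_{u\in\mathrm{supp}(P)}\bigl(P(u)-1\bigr)=\Bigl(\sum_{u}P(u)\Bigr)-|\mathrm{supp}(P)|=p-(p-z)=z=|Z|.$$
The vanishing moments then assert precisely that the power sums $p_{\gamma}(A)$ and $p_{\gamma}(B)$ agree modulo $p$ for every $\gamma=1,\dots,z$.

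Finally I would invoke Newton's identities. Since $z\le i\le p-2<p$, the integers $1,2,\dots,z$ are invertible modulo $p$, so the equalities of the power sums $p_{1},\dots,p_{z}$, together with $|A|=|B|=z$, propagate inductively to equalities of the elementary symmetric functions $e_{1},\dots,e_{z}$ of $A$ and $B$ modulo $p$. Consequently the two monic polynomials $\prod_{a\in A}(t-a)$ and $\prod_{b\in B}(t-b)$ coincide over $\F_p$, and as both split completely their root multisets are equal, i.e.\ $A=B$. This is the desired contradiction: the elements of $A$ lie in $\mathrm{supp}(P)$ while $B=Z$ is the complementary set of roots, so $A\cap B=\varnothing$, yet both are nonempty because $z\ge 1$. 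Hence not all moments up to order $z$ can vanish, and the bound $\deg P\ge p-1-z\ge p-1-i$ follows.

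The main obstacle is the passage from the purely algebraic hypothesis (moments vanishing modulo $p$) to a rigid combinatorial conclusion, and the decomposition $g=P-1$ is what makes this feasible: it converts the single arithmetic constraint ``range sum $=p$'' into the statement that two nonnegative integer measures of equal total mass $z$ have matching power sums up to order $z$, after which Newton's identities force the measures to coincide. The two points that require care are the invertibility of $1,\dots,z$ modulo $p$ (which is where the hypothesis $z<p-1$ is comfortably used) and the disjointness of the supports of $A$ and $B$, which is exactly what produces the contradiction and which tacitly depends on $P$ being non-constant.
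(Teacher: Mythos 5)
Your proof is correct and follows essentially the same route as the paper's: both compare the power sums of the root set with those of the multiset of ``excess'' values $P(u)-1$ via the vanishing of the moments $\sum_x x^{\gamma}P(x)$, and both invoke Newton's identities to match elementary symmetric functions. The only (cosmetic) difference is in the endgame: the paper normalizes $P(0)=0$ and contradicts $\sigma_j=0$, whereas you conclude full equality of the two multisets and contradict their disjointness.
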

\begin{proof}
Without loss of generality we may assume $P(0)=0$.
    Let $\{0 = a_1,a_2, \ldots, a_j\}$ ($j \le i$) denote the set of the roots of $P$, so these are different elements of $\F_p$. Since the sum of the values of $P$ is $p$ we can see that if $j\ge 1$, then there is $x \in \mathbb{F}_p$ such that $[P(x)]>1$. Now we build a multiset $S$ of the elements of
    $\mathbb{F}_p$. The multiplicity of $x$ in the multiset $S$ 
    is $[P(x)]-1$ if $[P(x)] \ge 1$ and $0$ otherwise. 
  The total weight of $S$ is clearly $j$.%\todo{Meggondolni, weight-et akarunk-e, esetleg multiplicity-t. Note: szerintem maradjon a weight (Z)}

Let us calculate $\sum_{x \in \mathbb{F}_p}[x^k P(x)] \in \Z$, where $k \le j$. We will compare this sum with 
$\sum_{x \in \mathbb{F}_p}x^k $, which is congruent to zero except if $k=p-1$. 
\begin{equation}\label{eq:22}
    \sum_{x \in \mathbb{F}_p}x^k P(x) \equiv \sum_{x \in \mathbb{F}_p}x^k - \sum_{l=1}^j a_l^k + \sum_{b \in S} b^k \pmod{p},
\end{equation}
where the numbers appearing in $S$ are counted with multiplicity
in the last sum. Instead we list the elements of the multiset $S$ as follows: $b_1,b_2, \ldots , b_j$, where $b_l$'s are not necessarily different. 

Assume now that $P$ is of degree less than $p-1-j$. Then we have $\sum_{x \in \mathbb{F}_p}x^k P(x) \equiv 0 \pmod{p}$ for $k \le j$. By equation \eqref{eq:22} we have $\sum_{l=1}^j a_l^k \equiv \sum_{l=1}^j b_l^k \pmod{p}$ if $k \le j $ since $\sum_{x \in \F_p} x^k \equiv 0 \pmod{p}$ for $k \le j < p-1$. 

Then by Newton's identities we have that the elementary symmetric polynomials of both $a_l$'s and $b_l$'s are the same. Since $a_1=0$ we have $\sigma_j(a_1, \ldots, a_j)=0$. This implies that $\sigma_j(b_1, \ldots, b_j)=0$. Then $b_{l'}=0$ for some $1 \le l' \le j$. This leads to a contradiction since $P(b_{l'})>1$ while $P(0)=0$. Notice that the fact that the symmetric polynomials of $a_i$ and $b_i$ are the same implies that these numbers are the same. 
 \end{proof}
 As it will be seen from the argument presented below, the proof of Proposition \ref{prop:22} essentially becomes trivial when $j \ge \frac{p-1}{2}$.
As a corollary of Proposition \ref{prop:22} we obtain a proof for Proposition \ref{prop:fullyred}.
\subsection{Identities for roots and values with multiplicities}
Let us denote by $c$ the leading coefficient of $P$. Let $a_1, a_2, \ldots , a_{\dd}$ denote the roots of $P$ and as in the proof of Proposition \ref{prop:22} let $\{ b_1,b_2, \ldots , b_k \}$ be a multiset, where the weight of $x \in \{0,1,\ldots, p-1\}$ is $[P(x)]-1$ if $[P(x)] \ge 1$ and $0$ otherwise. Since $\sum_{x \in \F_p}[P(x)]=p$ we have $k=\dd$.

It follows from equation \eqref{eq:22} that 
\begin{equation}\label{eq:a_lb_l}
    \sum_{l=1}^{\dd} a_l^{\dd} \equiv \sum_{l=1}^{\dd} b_l^{\dd} +c \pmod{p}
\end{equation}
It is well-known that $\left( \frac{a}{ p} \right) \equiv  a^{\dd}$ for every $a \in \F_p$. Thus we obtain.
\begin{equation}\label{eq:a_lb_llegendre}
    \sum_{l=1}^{\dd} \left( \frac{a_l}{ p} \right) \equiv \sum_{l=1}^{\dd} \legendre{b_l}{p} +c \pmod{p}
\end{equation}
It is clear that $-\dd \le \sum_{l=1}^{\dd} \legendre{x_l}{p} \le \dd$ for every $(x_l)_{l=1}^{\dd} \in \F_p^{\dd}$.
Thus, we obtain the following equality of integers 
\begin{equation}\label{eq:a_lb_llegendreres}
    \sum_{l=1}^{\dd} \left( \frac{a_l}{ p} \right) = \sum_{l=1}^{\dd} \legendre{b_l}{p} +r ,
\end{equation}
where $r=[c]$ or $r=[c]-p$. 

Using a suitable change of variables ($x \to x - \gamma$, $r\to r_{\gamma}$) of the polynomial $P$ we obtain 
\begin{equation}\label{eq:a_lb_l eltolva}
S_{\gamma}:=\sum_{l=1}^{\dd} \legendre{a_l-\gamma}{p} = \sum_{l=1}^{\dd} \legendre{b_l-\gamma}{p} +\rr. % =T_{\gamma}+ \rr.
\end{equation}
The following is just a technical statement.
\begin{lemma}\label{lem:ds}
$ \sum_{\gamma \in \F_p}   \sum_{l=1}^{k} \legendre{x_l-\gamma}{p} =0$ for every $(x_l)_{l=1}^k \in \F_p^k$.
\end{lemma}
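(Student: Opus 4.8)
The plan is to prove the identity by a single interchange of the order of summation, after which the claim collapses to the classical fact that the Legendre symbol sums to zero over all of $\F_p$. Concretely, I would first write
\[
\sum_{\gamma \in \F_p} \sum_{l=1}^{k} \legendre{x_l-\gamma}{p}
= \sum_{l=1}^{k} \sum_{\gamma \in \F_p} \legendre{x_l-\gamma}{p},
\]
which is legitimate because both index sets are finite.

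Next, I would fix an index $l$ and observe that the map $\gamma \mapsto x_l-\gamma$ is a bijection of $\F_p$ onto itself. Hence, as $\gamma$ runs over $\F_p$, the argument $x_l-\gamma$ runs over every element of $\F_p$ exactly once, so the inner sum is independent of $x_l$ and equals
\[
\sum_{\gamma \in \F_p} \legendre{x_l-\gamma}{p} = \sum_{t \in \F_p} \legendre{t}{p}.
\]

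Finally, I would invoke the standard evaluation $\sum_{t \in \F_p}\legendre{t}{p}=0$: among the nonzero elements of $\F_p$ there are exactly $\dd$ quadratic residues, each contributing $+1$, and $\dd$ non-residues, each contributing $-1$, while the term $t=0$ contributes $0$ by the convention $\legendre{0}{p}=0$; these cancel. Substituting back, each of the $k$ inner sums vanishes, so the whole double sum is $0$, independently of the chosen tuple $(x_l)_{l=1}^{k}$.

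As the statement itself advertises, there is essentially no obstacle here. The only points meriting any care are recording that $\gamma\mapsto x_l-\gamma$ is a bijection (so that the translation by $x_l$ does not affect the sum) and that the conventional value $\legendre{0}{p}=0$ is what makes the residue and non-residue counts balance exactly. The result then holds verbatim for every prime $p$ and every tuple, with no largeness assumption required.
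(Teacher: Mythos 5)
Your proof is correct and follows exactly the paper's argument: swap the order of summation and reduce each inner sum, via the bijection $\gamma\mapsto x_l-\gamma$, to the classical identity $\sum_{t\in\F_p}\legendre{t}{p}=0$. You have merely written out in full the details that the paper's two-line proof leaves implicit.
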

\begin{proof}
Since half of the nonzero elements of $\F_p$ are quadratic residues we have 
$ \sum_{\delta \in \F_p} \legendre{\delta}{p}=0$. Then changing the order of the summations gives the result. 
\end{proof}
\begin{cor}\label{cor:number of c and c-p} 
Let $r_{\gamma}$ denote the number defined in equation \eqref{eq:a_lb_l eltolva}. Then
\begin{itemize}
    \item $m_{c-p}:=\left| \left\{ \gamma \in \F_p  \colon \rr=[c]-p \right\} \right|=[c]$,
    \item $m_{c}:=\left| \left\{ \gamma \in \F_p  \colon \rr=[c] \right\} \right|=p-[c]$.
\end{itemize}
\end{cor}
\begin{proof}
    Lemma \ref{lem:ds} gives that $m_c [c]+ m_{c-p}([c]-p)=0$. The result follows from $m_c+m_{c-p}=p$.
\end{proof}
\subsection{Constraints on the leading coefficient}
The purpose of this section is to prove Theorem \ref{thm:leadingcoeff}. 

Let $k$ denote an absolute constant that satisfies
\begin{equation}\label{eq:1/7}
  \# \left\{ \gamma \in \mathbb{F}_p \colon \bigg \lvert  \sum_{l= 1}^{\frac{p-1}{2}}\legendre{a_l - \gamma}{p} \bigg \lvert \geq \frac{p}{7} \right\} \leq k.
\end{equation} 

{The framework of
Proposition \ref{1/7} guarantees that for any fixed integer $k \ge 16$, inequality \eqref{eq:1/7} holds if $p>p_0(k)$ for some $p_0\in \N$ depending only on $k$.
 With the explicit estimates in Section~2 before,
one may take $p_0=520219910$. 
From now on we assume $k=16$.
\begin{theorem}\label{thm:leadingcoeff}
    Let $p>p_0$ and $P$ be a polynomial of degree $\dd$ with $\sum_{x \in \F_p} [P(x)]=p$. Then $\LC(P)=1,p-1, \dd$ or $\frac{p+1}{2}$.
\end{theorem}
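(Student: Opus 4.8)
The plan is to pin down the integer $c:=\LC(P)$ after normalising by a variable affine map $x\mapsto ax+b$, which multiplies the leading coefficient by $\legendre{a}{p}=\pm 1$ and only permutes the range; choosing $a$ suitably we may assume $1\le c\le \dd$. It then suffices to exclude every $c$ with $2\le c\le \frac{p-3}{2}$, so that $c\in\{1,\dd\}$, which before normalisation is exactly $\LC(P)=\pm 1$ or $\pm\dd$.

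The whole argument runs on the partition of $\F_p$ supplied by \eqref{eq:a_lb_l eltolva} together with Corollary \ref{cor:number of c and c-p}. Write $\Gamma_2=\{\gamma:\rr=c-p\}$ and $\Gamma_1=\{\gamma:\rr=c\}$, so $|\Gamma_2|=c$ and $|\Gamma_1|=p-c$. Since the Legendre sum $T_\gamma:=\sub$ always lies in $[-\dd,\dd]$, relation \eqref{eq:a_lb_l eltolva} forces
\begin{equation*}
S_\gamma\le c-\tfrac{p+1}{2}\quad(\gamma\in\Gamma_2),\qquad S_\gamma\ge c-\tfrac{p-1}{2}\quad(\gamma\in\Gamma_1),
\end{equation*}
and because $c\le\dd$ every $\gamma\in\Gamma_2$ satisfies $S_\gamma\le-1$ with $|S_\gamma|\ge \frac{p+1}{2}-c$. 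I would then record two global identities. Lemma \ref{lem:ds} gives $\sum_{\gamma}S_\gamma=0$; and, crucially, since $P$ has $\dd$ \emph{distinct} roots (Proposition \ref{prop:fullyred}), the standard evaluation $\sum_{\gamma}\legendre{(a_l-\gamma)(a_{l'}-\gamma)}{p}=-1$ for $l\ne l'$ and $=p-1$ for $l=l'$ yields the clean second moment
\begin{equation*}
\sum_{\gamma\in\F_p}S_\gamma^2=\dd\cdot\tfrac{p+1}{2}=\tfrac{p^2-1}{4}.
\end{equation*}

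The decisive quantitative step is a one-line comparison: keeping only $\Gamma_2$ and using $S_\gamma^2\ge(\frac{p+1}{2}-c)^2$ there,
\begin{equation*}
\tfrac{p^2-1}{4}=\sum_{\gamma}S_\gamma^2\ \ge\ \sum_{\gamma\in\Gamma_2}S_\gamma^2\ \ge\ c\Big(\tfrac{p+1}{2}-c\Big)^2.
\end{equation*}
The cubic inequality $c(\frac{p+1}{2}-c)^2\le\frac{p^2-1}{4}$ is violated for every integer $c$ with $2\le c\le \frac{p+1}{2}-\sqrt{(p-1)/2}\,(1+o(1))$, so this entire bulk range is ruled out at once, without even invoking \eqref{eq:1/7}. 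What survives is $c=1$ or $c$ trapped in the narrow window $\frac{p-1}{2}-\sqrt{p/2}\lesssim c\le\frac{p-3}{2}$, that is $1\le d\le \sqrt{p/2}$ with $d:=\dd-c$.

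I expect this near-extremal window to be the main obstacle. Here $S_\gamma\le-(d+1)$ on $\Gamma_2$ is only mildly negative, so the second moment has no slack left, and the large-deviation count \eqref{eq:1/7} is powerless because its threshold $p/7\gg\sqrt p$ is never attained on $\Gamma_2$; no purely counting estimate can separate the true extremum $c=\dd$ (where one checks that $S_\gamma=-1$ for \emph{all} $\gamma\in\Gamma_2$, a completely rigid configuration) from its $d\ge 1$ impostors. Closing this window is where I would have to bring in the full Hasse--Weil–type machinery behind Proposition \ref{prop:1/7}, controlling the joint sign bias of $S_\gamma$ across the $\approx p/2$ shifts of $\Gamma_2$ at scales well below $p/7$, and split into the several sub-cases foreshadowed by the value of $\LC$; the auxiliary identity $\sum_{v}m_v^2=\dd+2\sum_{\gamma\in\Gamma_2}S_\gamma+c(p-c)$ (for the multiplicities $m_v$ of the $b_l$) should help translate the residual slack into rigid structure. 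Once $d\ge 1$ is excluded, only $d=0$, i.e. $c=\dd$, remains alongside $c=1$, and the two surviving cases are confirmed directly.
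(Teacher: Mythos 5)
Your proposal does not close the proof: after the second-moment step, every $c$ in the window $\dd-O(\sqrt p)\lesssim c\le\frac{p-3}{2}$ (on the order of $\sqrt p$ candidate values) is still alive, and you acknowledge that you do not exclude them --- what you offer for this range is a description of the difficulty and the hope that Hasse--Weil-type input plus an auxiliary multiplicity identity will ``translate the residual slack into rigid structure.'' That unclosed window is precisely where the paper's real work happens. Lemma \ref{lem:largeleadingcoeff} excludes all $c$ with $\dd-\frac p7<c<\dd$ by a multi-stage argument: it first locates a dominant value $b'$ with $\#\{b_l=b'\}>\frac{p-1}{4}$ by combining the lower bound $\sum_{\gamma\in\Gamma^+}\su\legendre{b_l-\gamma}{p}\ge\frac{5}{14}|\Gamma^+|p$ with the extremal trade-off of Lemma \ref{lem:paleyparameters}; it then uses the sign of $\legendre{b'-\gamma}{p}$ to decide whether $\rr=c$ or $c-p$ for almost every $\gamma$, plays this against Corollary \ref{cor:number of c and c-p} to force $c\ge\frac{p-3}{2}$, and finally runs a separate, sharper count to kill $c=\frac{p-3}{2}$. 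None of these steps appears in your proposal, and nothing in your setup produces them: on the surviving window the constraint from $\Gamma_2$ is only $S_\gamma\le-(d+1)$ with $d=\dd-c=O(\sqrt p)$, which is fully consistent with your second moment, so a genuinely new idea (the dominant $b'$ and the sign-rigidity argument) is still required.

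That said, the part you do prove is correct and is a genuinely different, and nicer, route for the bulk range. The identity $\sum_{\gamma\in\F_p}S_\gamma^2=\dd\cdot\frac{p+1}{2}$ is valid (it needs the $a_l$ distinct, which Proposition \ref{prop:fullyred} supplies, and the elementary complete sum $\sum_{\gamma}\legendre{(a_l-\gamma)(a_{l'}-\gamma)}{p}=-1$ for $l\ne l'$), and the comparison $c\bigl(\frac{p+1}{2}-c\bigr)^2\le\frac{p^2-1}{4}$ does eliminate every $c$ with $2\le c\le\frac{p+1}{2}-O(\sqrt p)$ in one stroke. This single inequality subsumes both of the paper's easier cases --- Lemma \ref{lem:kisc} for $1<c\le k$ and the unnamed lemma for $k<c\le\dd-\frac p7$ --- without invoking Proposition \ref{prop:1/7} or Lemma \ref{lem:paleyparameters} at all, and in fact reaches further than the paper's $\dd-\frac p7$ threshold. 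If the remaining window were closed by the paper's Lemma \ref{lem:largeleadingcoeff}, your observation would yield a cleaner overall proof; as it stands, the theorem is not proved.
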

We prove this theorem using a case by case analysis. 
Let us first observe that if $\legendre{a}{p}=-1$, then $[\LC(P(ax))]=p-[\LC(P(x))]$ 
since the degree of $P$ is equal to $\dd$. Thus we may assume $1 \le [\LC(P)] \le \dd$. 

We recall that the leading coefficient of $P$ is denoted by $c$.
\begin{lemma}\label{lem:largeleadingcoeff}
Let $\frac{p-1}{2} > [c] > \frac{p-1}{2} - \frac{p}{7}=\frac{5p-7}{14}$.
    If $p>p_0$, then  
    $\LC(P) \ne c$.
\end{lemma}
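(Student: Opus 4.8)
The plan is to derive a contradiction from the assumption $\LC(P)=c$ with $c$ in the narrow band $\frac{p-1}{2}-\frac{p}{7}<c<\frac{p-1}{2}$. The key leverage comes from Corollary \ref{cor:number of c and c-p}, which tells us that $m_{c-p}=c$ of the $p$ translates $\gamma$ satisfy $\rr=c-p$ and $m_c=p-c$ satisfy $\rr=c$. Since $c$ is very close to $\frac{p-1}{2}$, we have $m_{c-p}=c\approx \frac{p}{2}$ and $m_c=p-c\approx \frac{p}{2}$; in particular $m_{c-p}$ is \emph{large}. My aim is to show that $\rr=c-p$ forces $S_\gamma=\sua$ to be large in absolute value, so that having $c$ many such translates contradicts the sparsity inequality \eqref{eq:1/7}, which allows at most $k=16$ translates with $\left| S_\gamma\right|\geq \frac{p}{7}$.

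The central identity to exploit is \eqref{eq:a_lb_l eltolva}:
\begin{equation*}
S_\gamma=\sua=\sub+\rr.
\end{equation*}
When $\rr=c-p$, this reads $S_\gamma=\sub+(c-p)$. First I would record the trivial bound $-\dd\le \sub\le \dd$, so that $S_\gamma\le \dd+(c-p)=\frac{p-1}{2}+c-p=c-\frac{p+1}{2}<0$; and using $c>\frac{p-1}{2}-\frac{p}{7}$ on the other side, $S_\gamma\ge -\dd+(c-p)=c-\frac{3p-1}{2}>-\frac{3p-1}{2}+\frac{p-1}{2}-\frac{p}{7}$. The upper estimate gives $S_\gamma\le c-\frac{p+1}{2}<\frac{p-1}{2}-\frac{p}{7}-\frac{p+1}{2}=-\frac{p}{7}-1$, so immediately $S_\gamma<-\frac{p}{7}$, i.e. $\left|S_\gamma\right|>\frac{p}{7}$. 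Thus \emph{every} one of the $m_{c-p}=c$ translates with $\rr=c-p$ lies in the exceptional set of \eqref{eq:1/7}.

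The contradiction now is immediate in spirit: \eqref{eq:1/7} caps the number of $\gamma$ with $\left|S_\gamma\right|\ge \frac{p}{7}$ at $k=16$, whereas we have produced $c>\frac{p-1}{2}-\frac{p}{7}$ of them, which for $p>p_0$ vastly exceeds $16$. I would finish by checking the arithmetic that $\frac{p-1}{2}-\frac{p}{7}=\frac{5p}{14}-\frac12>16$ for all large $p$, completing the proof that $\LC(P)\neq c$ in this range.

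The main obstacle, and the place to be careful, is the direction and sharpness of the inequalities: one must verify that the threshold $\frac{p}{7}$ in \eqref{eq:1/7} is genuinely cleared by $\left|S_\gamma\right|$ for \emph{all} the $c-p$ translates and not merely on average, and that the off-by-constant terms ($\frac12$, $\frac{p\pm1}{2}$ versus $\frac{p}{7}$) leave a strict margin. The choice of the window width $\frac{p}{7}$ for $c$ is precisely calibrated so that $c-\frac{p+1}{2}$ drops below $-\frac{p}{7}$; I would double-check that the lower endpoint $c>\frac{p-1}{2}-\frac{p}{7}$ is exactly what makes $S_\gamma\le c-\frac{p+1}{2}<-\frac{p}{7}$ hold, since this is where the constant $\frac17$ and the band width must match. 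Once that alignment is confirmed, the counting contradiction against \eqref{eq:1/7} is routine.
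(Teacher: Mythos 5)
Your proposal has a fatal sign error at its central step, and the error is not repairable: the inequality you need points the wrong way precisely in the range of $c$ that this lemma covers. You write $S_\gamma\le c-\frac{p+1}{2}<\frac{p-1}{2}-\frac{p}{7}-\frac{p+1}{2}=-\frac{p}{7}-1$, but that chain requires $c<\frac{p-1}{2}-\frac{p}{7}$, whereas the hypothesis of the lemma is $c>\frac{p-1}{2}-\frac{p}{7}$. Under the actual hypothesis the trivial bound $\sub\le\frac{p-1}{2}$ only gives
\begin{equation*}
S_\gamma\;\le\;\frac{p-1}{2}+(c-p)\;=\;c-\frac{p+1}{2}\;<\;-1,
\end{equation*}
and for $c$ near the top of the range, say $c=\frac{p-3}{2}$, this is just $S_\gamma\le -2$ --- nowhere near $-\frac{p}{7}$. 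So the translates with $\rr=c-p$ are not forced into the exceptional set of \eqref{eq:1/7}, and no contradiction with the bound $k=16$ arises. What you have actually written down (once the direction of the inequality is corrected) is the proof of the \emph{other} lemma in the paper, the one for $k<c\le\frac{p-1}{2}-\frac{p}{7}$, where $c-\frac{p+1}{2}\le -\frac{p}{7}-1$ does hold and a single $\gamma$ with $S_\gamma>-\frac{p}{7}$ and $\rr=c-p$ already yields a contradiction.

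The band $\frac{p-1}{2}-\frac{p}{7}<c<\frac{p-1}{2}$ is exactly where this cheap argument dies, which is why the paper's proof of this lemma is the longest in Section 3. It works instead with the set $\Gamma^{+}$ of translates where $\rr=c-p$ and $S_\gamma>-\frac{p}{7}$ (of size between $c-k$ and $c$ by Corollary \ref{cor:number of c and c-p} and \eqref{eq:1/7}), shows via \eqref{eq:a_lb_l eltolva} that $\sub>\frac{5}{14}p$ for every $\gamma\in\Gamma^{+}$, and then uses Lemma \ref{lem:paleyparameters} to extract a single dominant value $b'$ attained with multiplicity more than $\frac{p-1}{4}$ among the $b_l$. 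The resulting sign correlation between $\legendre{b'-\gamma}{p}$ and $\sub$ forces $c\ge\frac{p-3}{2}$, and a sharpened second pass eliminates $c=\frac{p-3}{2}$. None of this structure appears in your proposal, so the gap is not a missing computation but a missing idea.
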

\begin{proof}
 Let $\Gamma^{+}$ denote the elements $\gamma$ of $\mathbb{F}_p$ such that 
\begin{equation*}
    \sum_{l=1}^{\dd} \bigg( \frac{a_l- \gamma}p  \bigg) > -\frac{p}{7} ~~\mbox{ and } ~ \rr=[c]-p.
\end{equation*}

It follows from Corollary \ref{cor:number of c and c-p}, Proposition \ref{1/7} and $[c]>\frac{5p-7}{14}$
that
\begin{equation}\label{eq:gamma+becslese}
   \frac{5p-231}{14}< [c]- k \leq \lvert \Gamma^+ \lvert \leq c.
\end{equation}
For every $\gamma \in \Gamma^{+}$ we have that $\su \legendre{a_l-\gamma}{p}=\su \legendre{b_l-\gamma}{p}+([c]-p) $. Using $$\su \legendre{a_l-\gamma}{p} \ge - \frac{p}{7}$$ we obtain
\begin{equation}\label{eq:5/14}
     \su \bigg( \frac{b_l -\gamma }p \bigg) \geq \frac{p+1}{2} - \frac{p}{7} > \frac{5}{14}p.
\end{equation}
Thus
\begin{equation}\label{eq:lower bound ds}
  \su \sum_{\gamma \in \Gamma^+}  \legendre{b_l - \gamma}{p} = \sum_{\gamma \in \Gamma^+} \su \bigg( \frac{b_l-\gamma}p \bigg) \geq \frac{5}{14} \lvert \Gamma^+ \lvert p > \frac{5}{14}\frac{5p-231}{14}p.
\end{equation}
Notice that $\frac{5}{14}\frac{5p-231}{14}p>\frac{p-1}{2}\frac{p}{4}$ if $p>p_0$.
Let $b' \in \mathbb{F}_p$ for which $\sum_{\gamma \in \Gamma^+}  \legendre{b'- \gamma}{p}$ 
is maximal. 
Let $t \in \mathbb{Q}$  such that
\begin{equation}\label{eq:100}
     \sum_{\gamma \in \Gamma^+}  \bigg( \frac{b'-\gamma}{p} \bigg) = \frac{p}{4} + t.
    \end{equation}
It follows from $|\Gamma_+|\le \frac{p-1}{2}$ that $t\le \frac{p}{4}$. Equation \eqref{eq:lower bound ds} together with $\frac{5}{14}\frac{5p-231}{14}p>\frac{p-1}{2}\frac{p}{4}$
shows us that $t>0$.

We claim that for every $b \neq b'$, we have
\begin{equation}\label{eq:upper bound with t}
         \sum_{\gamma \in \Gamma^+}  \bigg(\frac{b - \gamma}p \bigg) < \frac{p}{4} - t+3 .
         \end{equation}
Indeed, let $g:=|\Gamma^+|$ and let $g_+$  and $g_-$ denote the number of elements $\gamma$ of $\Gamma^+$ such that $\legendre{b'-\gamma}{p}=1$ and $\legendre{b'-\gamma}{p}=-1$, respectively.  
Plainly $g_+ + g_-$ is equal to $g$ if $b' \not\in \Gamma_+$ and $g_+ + g_-=g-1$ otherwise.
 Further $g_+ - g_-=\frac{p}{4}+t$, which immediately gives $g_+=\frac{g+\frac{p}{4} +t} {2} \mbox{ or } \frac{g+\frac{p}{4} +t-1} {2}$.

From Table~\ref{tab:tqr} we see that, for fixed distinct $b,b'\in\F_p$,
the number of elements $\gamma\in\F_p$ satisfying
\[
\bigg(\frac{b'-\gamma}p \bigg)=1\qquad \mbox{and } \qquad \bigg(\frac{b-\gamma}p \bigg)=1
\]
is at most $(p+1)/4$. 
Thus, among the $g_+$ elements of $\Gamma^+$ for which
$\left(\frac{b'-\gamma}p \right)=1$, fewer than $p/4+1$ can also satisfy
$\left(\frac{b-\gamma}p \right)=1$, provided that $b\ne b'$.
Now 
\begin{equation*}
    \begin{split}
         \sum_{\gamma \in \Gamma_+}\left(\frac{b-\gamma}p \right)+\sum_{\gamma \in \Gamma_+}\left(\frac{b'-\gamma}p \right)\le 2\left|\left\{  \gamma \in \Gamma_+\bigg| \left(\frac{b-\gamma}p \right)=\left( \frac{b'-\gamma}p \right)=1 \right\}\right|+\mathbf 1_{\Gamma_+}(b)+\mathbf 1_{\Gamma_+}(b') \\ \le \frac{p+1}{2}+2.
             \end{split}
 \end{equation*}
Thus equation \eqref{eq:100} gives 
$$\sum_{\gamma \in \Gamma_+}\left(\frac{b-\gamma}p \right) \le \left( \frac{p+1}{2}+2 \right) - \left( \frac{p}{4}+t\right) <\frac{p}{4}-t+3.
$$
Now equation \eqref{eq:100} and \eqref{eq:upper bound with t} imply
\begin{equation}\label{eq:upper bound ds}
     \su  \sum_{\gamma \in \Gamma^+}  \bigg( \frac{b_l - \gamma}p \bigg) < \#\{b_l = b'\}\bigg(\frac{p}{4} + t\bigg) + \#\{b_l \neq b' \} \bigg(\frac{p}{4}-t + 3 \bigg).
\end{equation}
It follows from our assumptions (cf., \eqref{eq:gamma+becslese}) that
 $\lvert \Gamma^+ \lvert \geq \frac{5p-231}{14}$, which implies
\begin{equation*}
\frac{5p-231}{14} \frac{5}{14} p \leq     \frac{5}{14}\lvert \Gamma^+ \lvert p . 
\end{equation*}    
Now for $p >p_0$\footnote{In this case $p>2849$ would also be enough} and $k=16$ 
\begin{equation*}
         \frac{p-1}{2} \frac{p}{4} +  \frac{3}{2}p <\frac{p-1}{2} \frac{25}{98} p - \frac{p-1}{2} \frac{5k}{7}.
\end{equation*}  
Using equation \eqref{eq:upper bound ds} and \eqref{eq:lower bound ds} we obtain 
\begin{equation*}
\frac{p-1}{2}  \frac{p}{4}+ \frac{3}{2}p<
    \#\{b_l = b'\}\bigg(\frac{p}{4} + t\bigg) + \#\{b_l \neq b'\} \bigg(\frac{p}{4}-t + 3  \bigg) , 
\end{equation*}
so using $\#\{b_l \neq b'\}<p/2$, we have
\begin{equation*}
 \frac{p-1}{2}  \frac{p}{4}<   \#\{b_l = b'\}\bigg(\frac{p}{4} + t\bigg) + \#\{b_l \neq b'\} \bigg(\frac{p}{4}-t \bigg) ,
\end{equation*}
which gives us 
\begin{equation}\label{eq:b' dominans}
\frac{p-1}{4}    < \#\{ b_l = b' \}  . 
\end{equation}
We conclude that the role of $b'$ is dominant. More precisely, we have the following.
If $\legendre{b'-\gamma}{p} = 1 $, then more than $
\frac{p-1}{4}$ terms in 
$S:=\sum_{l=1}^{\frac{p-1}2} \legendre{b_l-\gamma}{p}$  are positive, implying that $\sum_{l=1}^{\frac{p-1}2} \legendre{b_l-\gamma}{p}>0$. 
Similarly, if $\legendre{b'-\gamma}{p} = -1 $, then $\sum_{l=1}^{\frac{p-1}2} \legendre{b_l -\gamma}{p} < 0 $. {Both cases occur for exactly $\frac{p-1}{4}$ many $\gamma \in \F_p$.} 
Finally, if  $b'= \gamma $, then more than $\frac{p-1}{4}$ terms in $S$ are zero and since every other term is either $1$ or $-1$ we have $\left| \sum_{l=1}^{\frac{p-1}2} \legendre{b_l -\gamma}{p}\right| < \frac{p+1}{4} $.  

\textbf{Case I}: Assume that 
\begin{equation}\label{eq:uj}
        \bigg \lvert \su  \legendre{a_l - \gamma}{p} \bigg \lvert \leq \frac{p}{4} +2.
\end{equation}
Since $p$ is large enough, we have  $ \bigg \lvert \su  \legendre{a_l - \gamma}{p} \bigg \rvert  < \frac{p-1}{2}-\frac{p}{7}$.
Then equation \eqref{eq:a_lb_l eltolva} implies that $\su   \legendre{b_l - \gamma}{p} > 0$ if and only if $\rr =[c]-p$, and $\su   \legendre{b_l - \gamma}{p} < 0$ if and only if $\rr =[c]$.
By Lemma \ref{lem:paleyparameters} we have that \eqref{eq:uj} holds for all but at most one $\gamma$.
Therefore, it follows that $\rr=[c]-p$ for at least $\frac{p-3 }{2}$ choices of $\gamma \in \F_p$. By Corollary \ref{cor:number of c and c-p} we have $[c] \geq \frac{p-3}2$.

It remains to handle the case when $c = \frac{p-3}{2}$.
We may strengthen our previous estimates using this assumption so we repeat the calculation.  

As before, $|\Gamma^+| \ge c-k$ so we have that $\lvert \Gamma^+ \lvert \geq \frac{p-3}{2}-k$.  The same argument as applied to obtain equation \eqref{eq:lower bound ds} gives
\begin{equation*}
    \sum_{\gamma \in \Gamma^+} \sum_{l=1}^\dd  \bigg( \frac{b_l-\gamma}{p} \bigg) \geq \frac{5}{14} \lvert \Gamma^+ \lvert p, 
\end{equation*}
and again
\begin{equation*}
    \# \{ b_l = b' \} \bigg( \frac{p}{4} + t \bigg)+ \# \{ b_l \neq b'\} \bigg( \frac{p}{4} - t + 3  \bigg) \geq \frac{5}{14} \frac{p(p-1)}{2} - (k+1)p.
\end{equation*}
Since $\#\{b_l\neq b'\}\leq \frac{p-1}{2}$, this implies 
$$ \# \{ b_l = b' \} \bigg( \frac{p}{4} + t \bigg) + \# \{ b_l \neq b'\} \bigg( \frac{p}{4} - t \bigg) \geq \frac{5}{14} \frac{p(p-1)}{2} - (k+1)p-\frac{3(p-1)}{2}. $$ For $p\ge 520219910$ and $k=16$, the right-hand side is larger than $$ \frac{2}{7}\frac{p(p-1)}{2}. $$
%For $p$ large enough, this implies
%\begin{equation*}
% \# \{ b_l = b' \} \bigg( \frac{p}{4} + t \bigg)+ \# \{ b_l \neq b'\} \bigg( \frac{p}{4} - t \bigg) \geq \frac{2}7 \frac{p(p-1)}2.
%\end{equation*}
Simple rearrangement of the inequality gives 
\[
   ( \# \{ b_l = b'\} -  \#\{ b_l \neq b'\} )t \geq \frac{1}{28} \frac{p(p-1)}{2} .
\]
Since $t \leq \frac{p}{4}$ we have that
\[
     \# \{ b_l = b'\} -  \#\{ b_l \neq b'\}  \geq \frac{1}{7} \frac{p-1}{2}. 
\]

Using equation \eqref{eq:b' dominans}, this implies
\begin{equation}\label{eq:101}
    \# \{ b_l = b' \} \geq 
    \#\{ b_l \neq b'\} +
    \frac{1}{7} \frac{p-1}{2} >
    \frac{4}{7} \frac{p-1}{2},  
\end{equation}
which is an improvement for the lower bound on the multiplicity of $b'$ compared to equation \eqref{eq:b' dominans}.
For $\gamma = b'$ we get 
\[
   \bigg \lvert \su \legendre{b_l- b'}{p} \bigg \lvert \leq \frac{3}{7} \frac{p-1}{2}.  
\]
If 
$
\bigg \lvert \su  \legendre{a_l -b'}{p}  \bigg \lvert \leq \frac{p}{4} + 2   
$
then a simple triangle inequality gives
\[
    \bigg \lvert   \su  \legendre{a_l -b'}{p}  -  \su  \legendre{b_l - b'}{p}  \bigg \lvert \leq \frac{3}{7} \frac{p-1}{2} + \frac{p}{4} + 2 < [c]=\frac{p-3}{2}<\left|[c]-p\right|,
\]
a contradiction. 

\textbf{Case II.}:  
Assume that $$\bigg \lvert \su  \legendre{a_l -b'}{p}  \bigg \lvert > \frac{p}{4} + 2 . 
$$ 
By Lemma \ref{lem:paleyparameters}, there is at most one $\gamma' \in \mathbb{F}_p$ for which 
\begin{equation*}
\bigg \lvert \su \legendre{a_l -\gamma'}{p} \bigg \lvert > \frac{p}{4} + 2.   
\end{equation*}
Since this holds for $\gamma' = b'$, for all the other $\gamma$ (i.e. $\gamma \neq b'$) we have 
\begin{equation*}
\bigg \lvert \su \legendre{a_l -\gamma}{p}  \bigg \lvert \leq \frac{p}{4} +2.    
\end{equation*}
Since the multiplicity of $b'$ is large (see equation \eqref{eq:b' dominans}) the sign of  $\left( \frac{b'-\gamma}{p} \right) $ coincides with the one of $\sub$. Thus
\[
    \sua = \sub + (c-p),
\]
whenever $b'-\gamma$ is a quadratic residue, which happens exactly $\dd$ times. 
Corollary \ref{cor:number of c and c-p} then gives $c=\dd$, as required.
\end{proof}
The following lemma holds for $p$ large enough ($p>520219910$  according to Proposition \ref{1/7}).
\begin{lemma}\label{lem:37}
Let $p>520219910$.  Assume $ k < c' \leq \frac{p-1}{2} - \frac{p}{7} $. Then $\LC(P) \not\equiv c' \pmod{p}$. 
\end{lemma}
\begin{proof}
Assume indirectly that $LC(P)=c'$
Let us first look at the condition $c' > k$. In this case $r_{\gamma}=c'-p$ for more than $k$ different $\gamma \in \F_p$. It follows from Proposition \ref{1/7} that there exists $\gamma \in \F_p$ with 
\begin{equation*}
    \su \legendre{a_l - \gamma}{p} > -\frac{p}{7} \mbox{ and } \su \legendre{a_l - \gamma}{p}  = \su \legendre{b_l - \gamma}{p}  + (c'-p),
\end{equation*}
i.e. $\rr =c'-p$.
Clearly, 
$
\su \legendre{b_l -\gamma}{p}  \leq \frac{p-1}{2}$, which immediately implies
\begin{equation*}
 \su \legendre{a_l -\gamma}{p} =    \su \legendre{b_l -\gamma}{p}  + (c'-p) \leq -\frac{p}{7},
\end{equation*}
 a contradiction. 
\end{proof}
We will use the following in a way that it complements Lemma \ref{lem:37} so we will apply it if $p>8k$. 
\begin{lemma}\label{lem:kisc}
Assume $1 <c < \frac{1}{8}p$.  
Then $\LC(P) \ne c$.
\end{lemma}
\begin{proof}
  By Lemma \ref{lem:paleyparameters}, there is at most one $\gamma' \in \mathbb F _p$ such that 
\begin{equation*}
    \bigg \lvert \su \legendre{a_l - \gamma'}{p}  \bigg \lvert \geq \frac{p+2}{4}.
\end{equation*}
By Corollary \ref{cor:number of c and c-p} we have that $\rr= [c]-p$ for at least $2$ different $\gamma \in \F_p$ since $[c] \ge 2$.
 Thus there is a $\gamma \in \mathbb F_p$ ($\gamma\ne \gamma'$) such that
\begin{equation*}
     \su \legendre{a_l - \gamma}{p}= \su \legendre{b_l - \gamma}{p}+ ([c]-p)
~\mbox{ and }~ 
\bigg \lvert \sum_{l=1}^{\frac{p-1}{2}} \bigg(\frac{a_l-\gamma}p \bigg) \bigg \lvert \leq \frac{p+2}{4}.
\end{equation*}
Thus  \begin{equation*}
    p-[c]=\su \legendre{b_l - \gamma}{p}-\su \legendre{a_l - \gamma}{p} \le \left|\su \legendre{a_l - \gamma}{p}\right|+ \left| \su \legendre{b_l - \gamma}{p}\right| \le \frac{p-1}{2}+\frac{p+2}{4}=\frac{3p}{4}. \end{equation*}
It follows from our assumption $[c]<\frac{1}{8}p$ that $p-[c] > \frac{7}{8}p$, a contradicting $p-[c]\le 3p/4$.
%\end{equation*}
\end{proof}

\subsection{Uniqueness}
In the previous subsections we proved that if $P \in \F_p[x]$ is a reduced polynomial whose range sum is equal to $p$ and $\deg(P)=\dd$, then $\LC(P) =\pm 1$ or $\LC(P) =\pm \dd$. It is clear that using a change of variable $x \to ax$, where $\legendre{a}{p}=-1$ we may assume that $\LC(P)=1$ or $\LC(P)=\dd$. Now we prove that for both $c=1$ and $c=\dd$ there is a polynomial $P$ of degree $\dd$ whose range sum is $p$ and $\LC(P)=c$.
\subsubsection{Uniqueness if \texorpdfstring{$a_{\frac{p-1}{2}}=\pm 1$}{}}
Let us assume that the sum of the values of $P$ is $p$ and $\deg(P)=\dd$ and $a_\dd= 1$. 
%The leading coefficient of the polynomials $g_d:=P(x^2+d)$ 
%is equal to $1$. By Lemma \ref{lem:sumofvalues p-1coeff} we have $$\sum_{x \in \F_p} g_d(x) \equiv -a_\dd  \equiv -1 \pmod{p}$$ for every  $d \in \F_p$.
The polynomial $g_d:=P(x^2+d)$ has degree $p-1$ and leading coefficient $1$. By Lemma \ref{lem:sumofvalues p-1coeff} \[ \sum_{x\in\F_p}x^j=0 \quad (0\leq j\leq p-2), \qquad \sum_{x\in\F_p}x^{p-1}=-1, \] so we obtain \[ \sum_{x\in\F_p} g_d(x)\equiv -1 \pmod p \] for every $d\in\F_p$.

Let us now treat the values of the polynomial $g_d$ as integers in $\{0,1, \ldots, p-1 \}$.
Since 
$$S_d:=\sum_{x \in \F_p} [g_d(x)] =[P(d)]+ 2 \sum_{x \in d+(\F_p^*)^2   } [P(x)]$$
we have  $\sum_{x \in \F_p} [g_d(x)]\le 2p$. It follows that $\sum_{x \in \F_p} [g_d(x)]$
is either $p-1$ or $2p-1$. Now, if $[P(b)]$ is even, then $S_b=p-1$ and if $[P(b)]$ is odd, then $S_b=2p-1$. 

By Proposition \ref{prop:fullyred} $P$ has exactly $\dd$ roots. If $[P(x)]>1$ for every $x \in Supp(P)$, then the sum of the values of $P$ is larger than $p$ so we may assume $[P(b)]=1$ for some $b \in \F_p$. 
In this case 
\[
S_b=2p-1=1+2  \sum_{x \in b+(\F_p^*)^2   } [P(x)].
\]
This can only happen if $\sum_{x \in b+(\F_p^*)^2   } [P(x)]=p-1$. Since $\sum_{x \in \F_p}[P(x)]=p$, this implies that the quadratic nonresidues are roots of $P(x+b)$.
Therefore we have found all $\dd$ roots of a fully reducible polynomial with prescribed leading coefficients, proving the uniqueness of the polynomial. 

Note that the roots of $P(x+b)$ are the quadratic nonresidues so the corresponding polynomial is $x^{\frac{p-1}{2}}+1$. 
\subsubsection{Uniqueness if \texorpdfstring{$a_{\frac{p-1}{2}}=\pm \dd$}{}}
As in the previous case, we may assume $\LC(P)=\dd$.

\begin{proposition}\label{prop:unique c=p-1/2}
    If $LC(P)=c = \frac{p-1}{2}$, then
    \begin{equation*}
        P(x) = \frac{p-1}{2} (x-\beta')^{\frac{p-1}{2}} + \frac{p+1}{2} 
    \end{equation*}
    for some $\beta' \in \mathbb F_p$.
\end{proposition}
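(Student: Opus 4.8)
The plan is to show that under the hypothesis $c=\LC(P)=\dd$ the entire structure collapses onto a single point. Writing $\dd\equiv-\tfrac12$ and $\frac{p+1}{2}\equiv\tfrac12\pmod p$, the claimed polynomial is equivalent to the functional identity $1-2P(x)=(x-\beta')^{\dd}$; since $y^{\dd}\in\{0,1,-1\}$ for every $y\in\F_p$, this is in turn equivalent to saying that $P$ takes only the values $0,1,\frac{p+1}{2}$, attains $\frac{p+1}{2}$ exactly once (at $\beta'$), and has its $\dd$ roots equal to the quadratic residue translates $\beta'+q$, $q\in(\F_p^*)^2$. By Proposition~\ref{prop:fullyred}, $P$ is fully reducible with $\dd$ distinct roots $a_1,\dots,a_{\dd}$, so $P(x)=\dd\prod_l(x-a_l)$ is completely determined by its root set; hence everything reduces to identifying the roots.

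First I would reduce the statement to a single claim about the excess multiset $\{b_1,\dots,b_{\dd}\}$ (where $x$ occurs with multiplicity $P(x)-1$ whenever $P(x)\ge1$): it suffices to prove that this multiset consists of one point $\beta'$ repeated $\dd$ times. Indeed, if all $b_l=\beta'$, then $P(\beta')-1=\dd$, so $P(\beta')=\frac{p+1}{2}$, while every other point carries value $0$ or $1$; since $P$ has exactly $\dd$ roots, the remaining $\dd$ non-roots all have value $1$. Evaluating the shifted identity \eqref{eq:a_lb_l eltolva} at $\gamma=\beta'$ gives $\su\legendre{a_l-\beta'}{p}=\dd\legendre{0}{p}+r_{\beta'}=r_{\beta'}$; as the left-hand side lies in $[-\dd,\dd]$ and $r_{\beta'}\in\{\dd,\dd-p\}$, we are forced to have $r_{\beta'}=\dd$ and $\su\legendre{a_l-\beta'}{p}=\dd$, i.e. $a_l-\beta'$ is a non-zero quadratic residue for every $l$. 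Thus $\{a_l\}=\{\beta'+q:q\in(\F_p^*)^2\}$, and using $\prod_{q\in(\F_p^*)^2}(x-\beta'-q)=(x-\beta')^{\dd}-1$ we obtain $P(x)=\dd\big((x-\beta')^{\dd}-1\big)=\dd(x-\beta')^{\dd}+\frac{p+1}{2}$, as required.

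The core of the argument is therefore the concentration claim. Here I would exploit the two rigid facts already available for $c=\dd$. By Corollary~\ref{cor:number of c and c-p} we have $m_{c-p}=\dd$, so $\rr=c-p=-\frac{p+1}{2}$ on a set $\Gamma^-$ of size \emph{exactly} $\dd$; on $\Gamma^-$ identity \eqref{eq:a_lb_l eltolva} together with the trivial bound $|\su\legendre{b_l-\gamma}{p}|\le\dd$ forces $S_\gamma=\su\legendre{a_l-\gamma}{p}\le-1$, so the ``root character sum'' is strictly negative on a full half of $\F_p$. On the other hand a direct Jacobsthal computation (using that the roots are distinct) gives the exact second moment $\sum_{\gamma\in\F_p}S_\gamma^2=\dd(p-\dd)=\frac{p^2-1}{4}$. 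I would then combine these with Lemma~\ref{lem:paleyparameters}, exactly as in the endgame of Lemma~\ref{lem:largeleadingcoeff}: writing $\su\legendre{b_l-\gamma}{p}=\sum_{\beta}m_\beta\legendre{\beta-\gamma}{p}$ over the distinct values $\beta$ of the multiset with multiplicities $m_\beta$, the Paley bounds produce a dominant $b'$ with $\#\{b_l=b'\}>\frac{p-1}{4}$ and show that $\big|\su\legendre{a_l-\gamma}{p}\big|$ can exceed $\frac p4+2$ for at most one $\gamma$; pushing the resulting sign analysis, any mass of the multiset outside $b'$ would create more than $\dd$ indices $\gamma$ with $S_\gamma\le-1$ or violate $\sum_\gamma S_\gamma^2=\frac{p^2-1}{4}$, contradicting $|\Gamma^-|=\dd$. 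Hence all $b_l$ coincide.

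I expect this concentration step to be the main obstacle: the sign constraint $S_\gamma\le-1$ on a half of $\F_p$ and the exact second moment are individually compatible with a near-concentrated splitting such as $m_1=\dd-1,\ m_2=1$, so neither suffices alone, and only the finer Paley-graph common-neighbour count of Lemma~\ref{lem:paleyparameters} rules out such configurations. A complementary route, paralleling the already-handled case $\LC(P)=1$, is to study $g_d(x)=P(x^2+d)$, whose leading coefficient is again $\dd$, so that by Lemma~\ref{lem:sumofvalues p-1coeff} one has $\sum_x g_d(x)\equiv-\dd\equiv\frac{p+1}{2}\pmod p$; since $\sum_x g_d(x)=P(d)+2\sum_{x\in d+(\F_p^*)^2}P(x)\in\{\frac{p+1}{2},\frac{3p+1}{2}\}$, summing over $d$ shows the value $\frac{3p+1}{2}$ occurs for exactly $\dd$ choices of $d$. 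This pins down much of the value distribution and could shortcut the character-sum bookkeeping, but converting it into the full concentration statement still appears to require the Paley estimates.
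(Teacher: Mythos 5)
Your reduction of the proposition to the claim that the excess multiset $\{b_1,\dots,b_{\dd}\}$ is concentrated at a single point $\beta'$ is correct, and your derivation of the explicit form of $P$ from that claim (the identity at $\gamma=\beta'$ forces $r_{\beta'}=\dd$ and $\su\legendre{a_l-\beta'}{p}=\dd$, so the roots are exactly the quadratic-residue translates of $\beta'$) is sound. The problem is that the concentration claim itself is never proved, and it is not a routine detail: it is the entire content of the proposition. You assert that ``pushing the resulting sign analysis, any mass of the multiset outside $b'$ would create more than $\dd$ indices $\gamma$ with $S_\gamma\le -1$ or violate $\sum_\gamma S_\gamma^2=\frac{p^2-1}{4}$,'' but you concede in the same breath that the sign constraint and the second moment are individually compatible with near-concentrated splittings such as $(\dd-1,1)$, and you give no argument that the counts of Lemma~\ref{lem:paleyparameters} close the gap. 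They do not obviously do so: all of your tools see the multiset only through sums of Legendre symbols, which are stable under moving one or two points of mass, so excluding every splitting with a few stray $b_l$ would require an open-ended case analysis that the sketch does not supply. The same applies to your alternative route via $g_d(x)=P(x^2+d)$, which again pins down only the approximate value distribution.

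The paper avoids proving exact concentration altogether. It establishes only the partial concentration $\#\{b_l=b'\}\ge\frac{4}{7}\cdot\dd$ (equation \eqref{eq:101}, recycled from the proof of Lemma~\ref{lem:largeleadingcoeff}), deduces from the identity at $\gamma=b'$ that at least $\frac{11}{14}\dd$ of the roots satisfy $\legendre{a_l-b'}{p}=1$ and that $P(x)=1$ for at least $\frac{4}{7}\dd$ values of $x$, and then compares $P$ directly with the explicit candidate $G(x)=\dd(x-b')^{\dd}+\frac{p+1}{2}$: the two polynomials agree at at least $\frac{11}{14}\dd+\frac{5}{14}\dd=\frac{8}{7}\dd>\dd$ points, while $P-G$ has degree strictly below $\dd$ because the leading coefficients cancel, so $P\equiv G$. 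This ``more agreement points than the degree'' step is the idea your proposal is missing; once you have it, approximate concentration suffices, and the exact combinatorial statement you identify as the main obstacle becomes a corollary of the conclusion rather than a prerequisite for it.
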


\begin{proof}
We have already seen that $P$ is fully
reducible. 
Thus we write 
$P(x) = 
\frac{p-1}{2} 
\prod_{k=1}^\dd(x
-a_l)$ and 
$\sum_{x \in 
\mathbb F_p} [P(x)] = p$. 
Further we 
have that  
$a_i \neq 
a_j$ if $i 
\neq j$. As it is defined above, let 
$\{b_1, b_2, 
\ldots, b_{\frac{p-1}{2}} \}$ be the multiset such that the multiplicity of $b_i$ is $max\left\{ P(b_i)-1,0 \right\}$.
The argument leading to equation \eqref{eq:101} in the proof of Lemma \ref{lem:largeleadingcoeff} uses only the lower bound \[ |\Gamma^+|\ge [c]-k. \] In the present case $[c]=\frac{p-1}{2}$, and Corollary \ref{cor:number of c and c-p} together with Proposition \ref{1/7} gives \[ |\Gamma^+|\ge \frac{p-1}{2}-k. \] Hence, the same argument gives equation \eqref{eq:101} in this case as well.
%Notice that most of the  proof of Lemma \ref{lem:largeleadingcoeff} uses only that the leading coefficient $[c]$
%is large enough, so equation $\eqref{eq:101}$ holds in this case as well. 
Thus, there exists $b' \in \mathbb{F}_p$ such that
\begin{equation*}
\big \lvert \{ 1\le l \le \dd \colon b_l=b' \} \big \lvert \geq \frac{4}{7} \frac{p-1}{2}. 
\end{equation*}
Notice first that $[c]=\dd$ and $[c]-p=-\frac{p+1}{2}$.
Further, if $b'-\gamma$ is a quadratic residue, then $\sub>0$ and then $r=-\frac{p+1}{2}=[c]-p$ while if $b'-\gamma$ is a quadratic nonresidue, then  $r=\dd=[c]$. By Corollary \ref{cor:number of c and c-p} we have that the multiplicity of $r=[c]-p$ is equal to $[c]=\dd$ so we have found all the occasions when this happens. Then in the remaining cases we need to have $r=[c]$. In particular this happens if $\gamma=b'$ so we have 
\begin{equation*}
    \su \bigg(\frac{a_l - b'}{p} \bigg) = \su \bigg(\frac{b_l- b'}{p} \bigg) + \frac{p-1}{2}.
\end{equation*}
By the special choice of $\gamma$
\begin{equation*}
   \bigg \lvert \su \bigg(\frac{b_l- b'}{p} \bigg) \bigg \lvert \leq \# \{b_l \neq b' \} \leq \frac{3}{7} \frac{p-1}{2},
\end{equation*}
we have that
\begin{equation}\label{eq:47}
    \su \bigg(\frac{a_l - b'}{p} \bigg) \geq  \frac{4}{7} \frac{p-1}{2}.  
\end{equation}
The set of $\{a_l \}_{l=1}^{\dd}$ and the multiset $\{b_l \}_{l=1}^{\dd}$ are disjoint, so $(\frac{a_l - b'}{p}) \neq 0$ for $1 \le l \le \dd$, so each summand in the previous equation is either 1 or $-1$ so equation \eqref{eq:47} becomes
\[
\bigg \lvert \bigg \{ 1 \le l \le \dd ~\bigg|~ \legendre{a_l - b'}{p}  = 1 \bigg \} \bigg \lvert -
\bigg \lvert \bigg \{ 1 \le l \le \dd ~\bigg|~ \legendre{a_l - b'}{p}  = -1 \bigg \} \bigg \lvert \ge \frac{4}{7}\dd.
\]
Thus we obtain 
\begin{equation}\label{eq:11/14}
    \bigg \lvert \bigg \{ 1 \le l \le \dd ~\bigg|~ \legendre{a_l - b'}{p}  = 1 \bigg \} \bigg \lvert \geq \frac{11}{14} \frac{p-1}{2}.
\end{equation}
    We now estimate the number of $x \in \mathbb{F}_p$ such that $P(x) \ne 1 $. 
\begin{equation*}
      \# \{ x\in \mathbb F_p \colon  [P(x)]> 1 \} \leq 1 + \# \{ x\in \mathbb F_p \colon [P(x)]> 1,~ x \neq b' \} \leq \frac{3}{7}\frac{p-1}{2} + 1. 
\end{equation*}
$P$ has exactly $\dd$ different roots so the support of $P$ is of cardinality $\frac{p+1}{2}$. Hence
\begin{equation}\label{eq:p(x)=1}
  \#  \{  x \in \mathbb{F}_p  \colon P(x) = 1 \}  \geq  \frac{p+1}{2}-\left(1+\frac{3}{7} \dd \right) = \frac{4}{7}  \frac{p-1}{2}. 
\end{equation}
Let $G: \mathbb F_p \to \mathbb F_p$ be defined as follows. 
\begin{equation*}
    G(x) := \frac{p-1}{2} (x- b' )^{\frac{p-1}{2}} + \frac{p+1}{2}  .
\end{equation*}
The range of $G$ consists of 3 elements and it is equal to $\left\{ 0,1,\frac{p+1}{2}\right\}$.
Plainly, $G(x) = 0 $ exactly when  $\left( \frac{x- b'}{p} \right) = 1$. Then we obtain from  equation \eqref{eq:11/14} that
\begin{equation*}
    \# \{ x \in \mathbb F_p \colon P(x) = G (x) = 0  \} \geq \frac{11}{14}\frac{p-1}{2}. 
\end{equation*}
Therefore
\begin{equation}\label{eq:G(x)=0}
        \# \left\{ x \in \mathbb F_p \colon G (x) = 0 \mbox{ and } P(x) \neq 0 \right\} \leq \frac{3}{14}\frac{p-1}{2}.
\end{equation}
Now by equation \eqref{eq:p(x)=1} and \eqref{eq:G(x)=0}

\begin{equation*}
   \begin{split}
 &        \{ x \in \mathbb F_p \colon P(x) = G (x) = 1 \} 
   = \# \left\{ x \in \mathbb F_p \colon \left(\frac{x-b'}{p} \right) = -1 \mbox{ and } P(x) = 1  \right\} 
\\
&\geq   \{ x \in \mathbb F_p \colon  P(x) = 1  \} - \frac{3}{14} \frac{p-1}{2}-1 \geq
 \frac{4}{7} \frac{p-1}{2} - \frac{3}{14} \frac{p-1}{2} -1= \frac{5}{14} \frac{p-1}{2} -1.
    \end{split}
   \end{equation*}
We have that 
\begin{equation*}
    \# \{ x \in \mathbb F_p \colon P(x) = G (x) \} \geq \frac{11}{14} \frac{p-1}{2} + \frac{5}{14} \frac{p-1}{2}-1 = \frac{16}{14} \frac{p-1}{2}-1.
\end{equation*}
This means that the polynomial $H(x) := P(x)- G(x) $  has at least $\frac{4}{7} {(p-1)}-1$ roots. Also the polynomials $P$ and $G$ have equal leading coefficients so $\deg H < \frac{p-1}{2}$. This is only possible if $H \equiv 0$.  
\end{proof}
\section{The Lov\'asz-Schrijver construction}
Lov\'asz and Schrijver showed \cite{LS} that up to affine transformations (elements of $\mathrm{AGL}(2,p)$) there is only one set in $\F_p^2$  determining exactly $\frac{p+3}{2}$ directions. We mention that   
the characterisation of this set also follows from Rédei’s characterisation of the solutions of his Problem II for $q = p$ \cite{redei}.
Such a set can be described due to Szőnyi \cite{around} as follows. 

Let $\{ (0,x) \colon x \in \F_p \}$ and $\{ (x,0) \colon x \in \F_p \}$ be identified with $\F_p$. Then we may also identify $\F_p^*$ with 
$\{ (0,x) \colon 0\ne x \in \F_p \}$ and $\{ (x,0) \colon 0 \ne  x \in \F_p \}$. 
Let  
$$L:=\left\{ (0,x) \colon \left(\frac x p \right)=1 \right\} \bigcup \left\{ (x,0) \colon \left(\frac x p \right)=1 \right\} \bigcup \left\{(0,0) \right\}.
$$
It is clear that $|L|=p$ and it determines exactly $\frac{p+3}{2}$ directions. 
Notice that the projection polynomials determined by $L$ are described in Theorem \ref{thm:main}. Up to affine transformations there are two polynomials listed in Theorem \ref{thm:main}. We may see that both types appear as projection polynomials of $L$. Moreover, the ones listed in  
Theorem \ref{thm:main} \ref{item:b} appears exactly twice.
Since $L$ is equidistributed in the set of parallel lines not determining a direction of $L$ we may see that there are exactly $(p+1)-\frac{p+3}{2}=\frac{p-1}{2}$ projection polynomials that are constant. The remaining projection polynomials are of type \ref{thm:main} \ref{item:a}

The purpose of this section is to show that for large enough primes this result  follows from Theorem \ref{thm:main}. 
In order to do so we have to introduce the basic notion and some technique of the discrete Fourier transformation. We will also reformulate it in terms of the projection polynomials.

A character of a finite abelian group is a homomorphism to the multiplicative group of the complex numbers. 
For a function $f \colon 
\F_p^2 \to \mathbb{C}$ and a character $\xi$ let $\hat{f}(\xi)=\sum_{x \in \F_p^2} f(x) 
\overline{\xi(x)} $. Then $\hat{f}$ is called the Fourier transform of $f$. The set of characters of $\F_p^2$ will be denoted by $\widehat{\F_p^2}$.  Any character of $\F_p^2$ 
can be written as $\xi_v(x)=e^{\frac{2 \pi i}{p}\langle x,v \rangle}$, where $\langle x,v \rangle$ denotes the standard $\pmod p$ scalar product of the vectors $v, x \in \F_p^2$ (where $\F_p^2$ and $\widehat{\F_p^2}$ are identified). 
Thus the set of characters can be endowed with a group structure that is isomorphic to the original group $\F_p^2$. This identification also assigns to each nontrivial character a direction in $\F_p^2$.

The Plancherel formula  says that 
\begin{equation}\label{eq:Plancherel}
\sum_{x \in \F_p^2} |f(x)|^2=\frac{1}{p^2}\sum_{\xi \in \widehat{\F_p^2}} |\hat{f}(\xi)|^2.\end{equation}
In particular, for the characteristic function $\mathbf 1_S$ of the set $S$ we have \begin{equation*}\label{eq:Plancherel1}
\sum_{x \in \F_p^2} |\mathbf 1_S(x)|^2=\frac{1}{p^2}\sum_{\xi \in \widehat{\F_p^2}} |\widehat{\mathbf 1_S}(\xi)|^2.\end{equation*}
We will apply this for a set $S$ of cardinality $p$ determining exactly $\frac{p+3}{2}$ directions, so we have $\sum |\mathbf 1_S(x)|^2=p$. 

It is easy to see that if $\xi$ is not the trivial character, then 
\begin{equation}\label{eq:projpol}
   \hat{\mathbf 1}_S(\xi)=\sum_{i=0}^{p-1} m(i) \sigma^i,
\end{equation} 
where $m$ denotes an affine transformation of the projection polynomial corresponding to $\xi$ and $\sigma$ is a primitive $p$'th root of unity. 

Let $m$ be a projection polynomial of a set of cardinality $p$.
It follows from Proposition \ref{prop:+2} that $m$ is either the constant $1$ polynomial or of degree at most $\frac{p-1}{2}=\frac{p+3}{2}-2$. On the other hand \cite[Theorem 1.1]{somlai} implies that if $m$ is not constant, then $m$ is of degree at least $\frac{p-1}{2}$ so we obtain that $m$ is either constant or of degree $\frac{p-1}{2}$.
The polynomials of degree $\frac{p-1}{2}$ whose range sum is $p$ are described in Theorem \ref{thm:main}.

\begin{lemma}\label{lem:fourier} \ Let $m$ denote an affine transformation of the projection polynomial corresponding to $\xi$. \vspace{-0.3cm}
\begin{enumerate}
    \item 
If $m$ is constant, then  $\hat{\mathbf 1}_S(\xi)=0$. 
    \item 
If the leading coefficient of $m$ is $\pm 1$, then $|\hat{\mathbf 1}_S(\xi)|=\sqrt{p}$. 
    \item 
If the leading coefficient of $m$ is $\pm \frac{p-1}{2}$, then $\frac{p}{2}- \frac{\sqrt{p}}{2} \le  |\hat{\mathbf 1}_S(\xi)|\le \frac{p}{2}+ \frac{\sqrt{p}}{2}$. 
\end{enumerate}    
\end{lemma}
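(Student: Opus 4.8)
The plan is to read everything off the identity \eqref{eq:projpol}, which expresses $p\hat{1}_S(\xi)$ as the exponential sum $\sum_{i\in\F_p} m(i)\sigma^i$, where $\sigma$ is a fixed primitive $p$-th root of unity, $m(i)\in\{0,1,\dots,p-1\}$ is the integer-valued projection polynomial (the number of points of $S$ on the $i$-th line), and $\sum_{i} m(i)=p$. The three cases will reduce to evaluating this sum for the three shapes of $m$ furnished by Theorem \ref{thm:main}, and the single analytic ingredient will be the quadratic Gauss sum $g:=\sum_{j\in\F_p}\legendre{j}{p}\sigma^j$, with $|g|=\sqrt p$. The first case is immediate: if $m$ is constant it must equal $1$ (it is the projection polynomial of a set of size $p$), so $\sum_{i\in\F_p}\sigma^i=0$ and hence $\hat{1}_S(\xi)=0$.

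For the second case I would use item \ref{itemaa} of Theorem \ref{thm:main}: up to the translation $x\to x-a$ the polynomial is $\pm(x-a)^{\dd}+1$. Since $(x-a)^{\dd}=\legendre{x-a}{p}$, the integer values of $m$ lie in $\{0,1,2\}$, so no reduction ambiguity arises and I may write $m(i)=\pm\legendre{i-a}{p}+1$ directly. Substituting into the sum, the constant summand contributes $\sum_i\sigma^i=0$, while the Legendre part becomes $\pm\sigma^a g$ after the shift $j=i-a$. Therefore $|p\hat{1}_S(\xi)|=|g|=\sqrt p$, which is exactly $|\hat{1}_S(\xi)|=\frac{\sqrt p}{p}$.

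For the third case I would invoke Proposition \ref{prop:unique c=p-1/2} (equivalently item \ref{itembb} of Theorem \ref{thm:main}): the polynomial is $\frac{p-1}{2}(x-\beta)^{\dd}+\frac{p+1}{2}$ up to the sign of the leading coefficient. Here the integer value-list must be computed, since the coefficients are large: one checks that $m$ takes the value $\frac{p+1}{2}$ at $x=\beta$, the value $0$ on the quadratic residues and the value $1$ on the non-residues (the residue/non-residue roles being exchanged for the opposite sign). Writing $m$ as this explicit integer-valued function and using that the sum of $\sigma^j$ over the non-residues $j$ equals $\frac{-1-g}{2}$, the whole sum collapses to $\sigma^\beta\cdot\frac{p-g}{2}$ (respectively $\sigma^\beta\cdot\frac{p+g}{2}$ for the other sign). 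Taking moduli and applying the triangle inequality $p-\sqrt p\le|p\mp g|\le p+\sqrt p$ gives precisely $\frac p2-\frac{\sqrt p}{2}\le p|\hat{1}_S(\xi)|\le\frac p2+\frac{\sqrt p}{2}$.

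The one genuinely delicate point is the passage from the $\F_p$-polynomial supplied by Theorem \ref{thm:main} to the integer value-list in $\{0,\dots,p-1\}$ that actually feeds the Fourier sum: this is harmless in the second case (all values are $0,1,2$) but in the third case the large leading coefficient $\frac{p-1}{2}$ forces a reduction modulo $p$, which is exactly what turns $\frac{p-1}{2}+\frac{p+1}{2}$ into $0$ and produces the $\frac{p\mp g}{2}$ above. Once the value-lists are pinned down, the remainder is the standard Gauss-sum estimate.
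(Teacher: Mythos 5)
Your proof is correct and follows essentially the same route as the paper: substitute the explicit polynomials from Theorem \ref{thm:main} into the identity \eqref{eq:projpol} and reduce everything to the quadratic Gauss sum, with the value-list computation in the third case matching the paper's decomposition $m(x)=\frac{p}{2}1_0+\frac12(\legendre{x}{p}+1)$ up to the sign of the leading coefficient. The only (harmless) divergence is in case 2, where the paper normalises to $x^{\dd}+1$ and then argues that affine substitutions preserve the modulus via a phase factor and a Galois automorphism of $\mathbb{Q}(\sigma)$, whereas you sidestep this by computing directly with every member $\pm(x-a)^{\dd}+1$ of the affine orbit, which is slightly more self-contained.
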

\begin{proof}
    \begin{enumerate}
        \item Follows trivially from the fact that the sum of the $n$'th roots of unity is $0$ for every positive integer $n$.
        \item 
It is well-known \cite{Gsum} that the Gauss sum $\sum_{x \in \F_p} e^{\frac{2 \pi i}{p}x^2}$ is of absolute value $\sqrt{p}$.         

As we have seen in equation \eqref{eq:projpol}, $\hat{\mathbf 1}_S(\xi)=\sum m(i) \sigma^i$. If the leading coefficient of $m$ is $\pm 1$, then $m(x)$ can be obtained from $x^{\frac{p-1}{2}}+1$ by a change of variable $x \to ax+b$.

Now $m(x)=x^{\frac{p-1}{2}}+1$ is 0 at the quadratic non-residues, 2 at the quadratic residues and 1 at 0. This means that 
$$\hat{\mathbf 1}_S(\xi)= \sum_{i=0}^{p-1} m(i) \sigma^i =   \sum_{x \in \F_p} e^{\frac{2 \pi i}{p}x^2}$$ so $$|\hat{\mathbf 1}_S(\xi)|=\sqrt{p}.$$

Now $\sum m(ai+b) \sigma^i$ can also be expressed as 
$\sum m(i) \sigma^{a'i+b'}$ for some $a' \in \F_p^*, b' \in \F_p$. The effect by the translation by $b'$ is just a multiplication with a number of absolute value $1$. Further $\sigma^x \to \sigma^{a'x}$ can be obtained using a Galois automorphism in $\mathrm{Gal}(\mathbb{Q}(\sigma)|\mathbb{Q})$. For this particular quadratic Gauss sum, the resulting Galois conjugate differs from the original sum only by the factor $\left(\frac{a'}{p}\right)=\pm 1$, after the change of variable $y=a'x$; hence it has the same absolute value, finishing the proof of this case.

%So this is an abelian group containing the complex conjugation so each automorphism preserves the absolute value of the sum, finishing the proof of this case.

        \item 
        As we have seen in the previous case we only have to verify our statement when $m(x)=\frac{p+1}{2}\left(x^{\frac{p-1}{2}}+1 \right)$.
  Now treating the values of $m(x)$ as nonnegative rational numbers in $\{0,1, \ldots, p-1\}$ we obtain that
\[
m(x)=\frac{p+1}{2}\left(x^{\frac{p-1}{2}}+1 \right)= \frac{p}{2} \mathbf{1}_0 + \frac{1}{2} \left( \legendre{x}{p}+1 \right). 
\]

Therefore, for every nontrivial $\sigma$, we have
\[
\sum_{x\in\F_p}m(x)\sigma^x
=
\frac p2+\frac12\sum_{x\in\F_p}\left(\frac{x}{p}\right)\sigma^x,
\]
because $\sum_{x\in\F_p}\sigma^x=0$. The second term is half of a
quadratic Gauss sum, hence its absolute value is $\sqrt p/2$. This gives
\[
\frac p2-\frac{\sqrt p}{2}
\le
\left|\sum_{x\in\F_p}m(x)\sigma^x\right|
\le
\frac p2+\frac{\sqrt p}{2}.
\]
Affine changes of variable are handled as in the previous case.

\end{enumerate}
\end{proof}
Using Lemma \ref{lem:fourier} we are able to estimate 
the right hand side of equation \eqref{eq:Plancherel} for $f=\mathbf 1_S$.

If $\xi$ is the trivial character, then $\hat{f}(\xi)=p$ so $|\hat{f}(\xi)|^2=p^2$.

If the polynomial $m$ in equation \eqref{eq:projpol} is constant, then $\hat{f}(\xi)=0$. Since $|S|=p$ we have that in this case, each line orthogonal to $\xi$ (the representations of $\F_p^2$ are identified with elements of $\F_p^2$ so they are considered vectors in this vector space) contains exactly $1$ element of $S$ so $S$ does not determine the direction orthogonal to $\xi$.
We have exactly $(p+1)-\frac{p+3}{2}=\frac{p-1}{2}$ directions not determined by $S$ so we have exactly 
$(p-1)\frac{p-1}{2}$ roots of $\hat{\mathbf 1}_S$, the representations corresponding to these directions.

Let $M$ denote the number of directions such that the corresponding projection polynomial $m$ is of leading coefficient $\pm 1$ as in Lemma \ref{lem:fourier}, case 2. 
Note that the projection polynomial towards a direction is only determined up to an element of $\mathrm{AGL}(1,p)$ but these transformations can only change the leading coefficient of $m$ up to a $\pm 1$ factor. 

Then it does follow from equation \eqref{eq:Plancherel} and by Lemma \ref{lem:fourier} that
\begin{equation}\label{eq:fourier 2 irany}
\begin{split}
&\frac{1}{p^2} \left( p^2+ ( p-1 )
\left[
\frac{p-1}{2} 0 + M \sqrt{p}^2 + \left( \frac{p+3}{2}-M \right) \left( \frac{p}{2}- \frac{\sqrt{p}}{2} \right)^2
\right]
\right) \le \\ 
 &p=\frac{1}{p^2}\sum |\widehat{\mathbf 1}_S(\xi)|^2 \le \frac{1}{p^2} \left(p^2 + (p-1)
\left[
\frac{p-1}{2} 0 + M \sqrt{p}^2 + \left( \frac{p+3}{2}-M \right) \left( \frac{p}{2}+ \frac{\sqrt{p}}{2} \right)^2
\right]
\right).    
\end{split}
\end{equation}

From the lower bound in equation \ref{eq:fourier 2 irany} we immediately obtain that $\frac{p+3}{2}-M\le 5$ and case by case calculation shows that $\frac{p+3}{2}-M\le 2$ if $p \ge 47$. If $\frac{p+3}{2}-M=0$, then $p^2 +(p-1)\frac{p+3}{2}p=p^3$, which holds only for $p=3$. If $\frac{p+3}{2}-M=1$, then the reduced equation satisfies that \[ \left(\frac{p}{2}-\frac{\sqrt{p}}{2}\right)^2\le \frac{(p-1)p}{2}\le \left(\frac{p}{2}+\frac{\sqrt{p}}{2}\right)^2. \] 
Here the lower bound holds if $p>2$, while the upper bound is equivalent to \[ p-3\le 2\sqrt p, \] which implies $p\le 9$. Hence the case $\frac{p+3}{2}-M=1$ is impossible under the assumption $p\ge 47$. Thus \[ \frac{p+3}{2}-M=2 \] for $p\ge 47$.

%From the lower bound in equation \ref{eq:fourier 2 irany} we immediately obtain that $\frac{p+3}{2}-M\le 5$ and case by case calculation shows that $\frac{p+3}{2}-M\le 2$ if $p \ge 47$. If $\frac{p+3}{2}-M=0$, then $p^2 +(p-1)\frac{p+3}{2}p=p^3$, which holds only for $p=3$. If $\frac{p+3}{2}-M=1$, then the reduced equation satisfies that $$\left(\frac{p}{2}-\frac{\sqrt{p}}{2}\right)^2\le \frac{(p-1)p}{2}\le \left(\frac{p}{2}+\frac{\sqrt{p}}{2}\right)^2.$$ Here we get that the lower bound holds if $p>2$, but the upper bound holds only for $3\le p$.
%These implies that $\frac{p+3}{2}-M=2$ if $p$ is large enough $(p\ge 47)$.

It remains to translate this calculation on the Fourier coefficients to a combinatorial result. We have obtained that there are two directions when the corresponding projection polynomials $m_1$ and $m_2$ are of degree $\frac{p-1}{2}$ with leading coefficient $\pm \frac{p-1}{2}$. 
Then it follows from Theorem \ref{thm:main}
that there is an $y_i \in \F_p$ for $i=1,2$ such that $m_i(y_i)=\frac{p+1}{2}$  (and $m_i(y_i+z)=0 \mbox{ or } 1$  
depending only on whether $z\neq 0$ is a quadratic residue or not). 

It does simply mean that there are two non-parallel lines,  each of them containing $\frac{p+1}{2}$ elements of $S$. 
Then the intersection of these lines must belong to $S$ since $|S|=p$, moreover there can not be further points of $S$ outside the two lines in view. 
Note that the nonzero elements of the support of $\frac{p+1}{2}(x^\dd+1)$ 
and  
$\frac{p+1}{2}(-x^\dd+1)$ are the set of quadratic residues and the set of quadratic nonresidues, respectively. This simply means that we have two parallel classes of lines and on both of these classes, $p+1$ points lie on the lines indexed by $0$. The remaining points are found on the lines indexed by quadratic residues or quadratic nonresidues. 

Thus, after applying a suitable identification of these two lines with  
 $\F_p$, we may assume that the point in the intersection of the lines coincide with $0$ on both lines and the remaining points of $S$ can be identified with the quadratic residues or quadratic nonresidues, just as it is described in Theorem \ref{thm:lovaszschrijver}.

Note that the previous argument shows that if  $p \ge 47$ holds for the prime $p$ and the statement of Theorem \ref{thm:main} holds, then for the same prime $p$ we can derive the uniqueness result of Lovász and Schrijver.\qed

Further, note that Sz\H{onyi} \cite{szonyi2lines} classified the sets contained in the union of two lines, with minimal possible number of directions. This result could also have been applied to finish the proof of Theorem \ref{thm:lovaszschrijver}.

\section{Concluding remarks and open problems}
In this paper we characterised those polynomials $P$ over $\ff_p$ for which 
$\sum_{x \in \F_p} [P(x)]=p$, and $\deg(P)$ is minimal; provided that $p$ is large enough. We strongly believe that the latter condition is only technical, and one can get rid of it.
Several natural questions arise here. The first is concerning stability.
\begin{problem}
    Suppose that $\sum_{x \in \F_p} [P(x)]=p$ for a polynomial $P$ over $\ff_p$, but $P$ is not an affine translate of either $x^{\frac{p-1}{2}}+1$
    or $\frac{p+1}{2}\left( x^{\frac{p-1}{2}}+1 \right)$. Does this imply $\deg(P)\geq C\cdot (p-1)$ for some constant $C>\frac{1}{2}$, independent of $p$?
\end{problem}
We conjecture that for $p$ large enough, this holds even for $C=\frac{2}{3}$. This would imply the result of Gács concerning the number of determined directions \cite{gacs}.

The next question is about further results on small range sums.

\begin{problem}\label{probl:2} Fix a positive integer $k>1$.
    Suppose that $\sum_{x \in \F_p} [P(x)]=k\cdot p$ for a polynomial $P$ over $\ff_p$. Is it true that apart from finitely many primes $p$,  either $P$ is an affine translate of $c\cdot (x^{\frac{p-1}{2}}+1)+d$ for some constants $c,d$, or   $\deg(P)> \frac{p-1}{2}$ holds? 
\end{problem}
\vspace{-0.3cm}
{Notice that the range sum of the polynomials $(x-1)(x-2)+1, (x-1)(x-2)+2 \in \mathbb{F}_7$ is $14$. Moreover, if $p=5$, then the non-constant polynomials of degree less than $\frac{p-1}{2}$ are linear. These are permutation polynomials so the range sum is $10$ in this case. These examples show that we may only expect an affirmative answer for the question raised in Problem \ref{probl:2} when $p$ is large enough.} 

Finally, we are interested in general in the relation between $h(p)=\sum_{x \in \F_p} [P(x)]$ and $$\min\deg \{ P: \sum_{x \in \F_p} [P(x)]=h(p) \mbox{ and } P \ne 1 \}.$$
%\textbf{Acknowledgement}\\
%We are grateful for the valuable suggestions and feedback from the anonymous referee. When we first submitted the manuscript, the text contained numerous errors, and many of the claims or their proofs were slightly inaccurate. Therefore, the patience of both the reviewer and the editor, along with the reviewer's suggestions, was invaluable to us.
%We really appreciate that the review was completed at the speed of light. 

\end{document}